\documentclass[11pt]{article}
\usepackage{fullpage}
\usepackage[mathscr]{eucal}
\usepackage{epsfig,epsf,psfrag}
\usepackage{amssymb,amsfonts,latexsym}
\usepackage{amsmath}
\usepackage{graphicx}
\usepackage{epstopdf}
\usepackage{bm,xcolor,url}
\usepackage{fixltx2e}
\usepackage{array}
\usepackage{verbatim}
\usepackage{algpseudocode, cite}
\usepackage{algorithm}
\usepackage{verbatim}
\usepackage{textcomp}
\usepackage{mathrsfs}
\usepackage[referable]{threeparttablex}
\usepackage{subfig}
\usepackage{amsthm}
\usepackage{enumerate}
\usepackage{footnote}
\usepackage{enumitem}
\usepackage{xr}
\usepackage{mathtools}
\catcode`~=11 \def\UrlSpecials{\do\~{\kern -.15em\lower .7ex\hbox{~}\kern .04em}} \catcode`~=13 

\allowdisplaybreaks[3]

\newcommand{\tnorm}[1]{{\left\vert\kern-0.25ex\left\vert\kern-0.25ex\left\vert #1 
    \right\vert\kern-0.25ex\right\vert\kern-0.25ex\right\vert}}
\newcommand{\tnormt}[1]{{\vert\kern-0.25ex\vert\kern-0.25ex\vert #1 
    \vert\kern-0.25ex\vert\kern-0.25ex\vert}}

\newcommand{\norm}[1]{\left\Vert#1\right\Vert}
\newcommand{\normt}[1]{\Vert#1\Vert}
\newcommand{\abst}[1]{\vert#1\vert}

\newcommand{\nn}{\nonumber}

\newcommand{\nt}{\addtocounter{equation}{1}\tag{\theequation}} 

\newcommand{\dom}{\mathsf{dom}\,}

\newcommand{\bareps}{\bar{\varepsilon}}

\newcommand{\ri}{\mathsf{ri}\,}

\newcommand{\dist}{\mathsf{dist}\,}





\newcommand{\calI}{\mathcal{I}}

\newcommand{\calL}{\mathcal{L}}

\newcommand{\calX}{\mathcal{X}}
\newcommand{\calY}{\mathcal{Y}}





\newcommand{\rmH}{\mathrm{H}}


\newcommand{\bbR}{\mathbb{R}}



\DeclareMathAlphabet{\mathbsf}{OT1}{cmss}{bx}{n}




\newcommand{\hatx}{\hat{x}}

\newcommand{\barf}{\bar{f}}

\newcommand{\bart}{\bar{t}}

\newcommand{\barx}{\bar{x}}







\newcommand{\ipt}[2]{\langle{#1},{#2}\rangle}

\newcommand{\lranglet}[2]{\langle{#1},{#2}\rangle}


\newcommand{\lea}{\stackrel{\rm(a)}{\le}}
\newcommand{\leb}{\stackrel{\rm(b)}{\le}}

\DeclareMathOperator*{\argmax}{arg\,max}




\newtheorem{theorem}{Theorem} 
\newtheorem*{theorem*}{Theorem}
\newtheorem{lemma}{Lemma}

\newtheorem{prop}{Proposition}
\newtheorem{corollary}{Corollary}
 
\newtheorem*{assump*}{Assumption}

\theoremstyle{remark}
\newtheorem{remark}{Remark}

\newcommand{\qednew}{\nobreak \ifvmode \relax \else
      \ifdim\lastskip<1.5em \hskip-\lastskip
      \hskip1.5em plus0em minus0.5em \fi \nobreak
      \vrule height0.75em width0.5em depth0.25em\fi}

\newcommand{\ump}{{p_{\min}}}
\title{Convergence Rate Analysis of the Multiplicative Gradient Method for PET-Type Problems}
\usepackage[colorlinks=true,breaklinks=true,bookmarks=true,urlcolor=blue,
     citecolor=blue,linkcolor=blue,bookmarksopen=false,draft=false]{hyperref}
\author{Renbo Zhao\thanks{MIT Operations Research Center, 77 Massachusetts Avenue, Cambridge, MA   02139 ({mailto:  renboz@mit.edu}).}}
\begin{document}

\maketitle

\begin{abstract}
We analyze the convergence rate of the multiplicative gradient (MG) method for PET-type problems with 
$m$ component functions and an $n$-dimensional optimization variable. 
We show that the MG method has an $O(\ln(n)/t)$ convergence rate, 
in both the ergodic and the non-ergodic senses. 
Furthermore, we show that the distances from the iterates to the set of optimal solutions converge (to zero) at rate $O(1/\sqrt{t})$. Our results show that, in the regime $n=O(\exp(m))$, to find an $\varepsilon$-optimal solution of the PET-type problems, the MG method has a lower computational complexity compared with the relatively-smooth gradient method and the Frank-Wolfe method for convex composite optimization involving a logarithmically-homogeneous barrier. 
\end{abstract}


\section{Introduction}

We consider the following optimization problem:
\begin{align}
f^*:={\max}_{x\in\Delta_n}\; \left\{f(x):=\textstyle\sum_{j=1}^m p_j\ln(a_j^\top x)\right\}, \tag{P} \label{eq:P}
\end{align}
where $\Delta_n :=\{x \in \bbR^n : x \ge 0, \sum_{i=1}^n x_i = 1\}$ denotes the (standard) unit simplex in $\bbR^n$,  $p_j> 0$ for all $j\in [m]$, and
$a_j\in\bbR_+^n$ for all $j\in [m]$. (Here $\bbR_+^n:= \{x\in\bbR^n:x_i\ge 0,\;\forall\,i\in[n]\}$, namely the nonnegative orthant in $\bbR^n$.)  
We assume without loss of generality that $\sum_{j=1}^m p_j=1$. 
For well-posedness, we further assume that $a_j\ne 0$ for every $j\in[m]$, from which it follows that $\dom f\cap\Delta_n\ne \emptyset$ and hence \eqref{eq:P} has an optimal solution. 

\subsection{Applications} \label{sec:app}

The problem \eqref{eq:P} subsumes many diverse applications, including most notably the positron emission tomography (PET) problem in medical imaging~\cite{Vardi_85}, computing the rate distortion in information theory~\cite{Csiszar_74},  maximum likelihood estimation for mixture models in statistics~\cite{Vardi_93}, the log-optimal investment problem~\cite{Cover_84}, and the maximum-likelihood inference of the multi-dimensional Hawkes processes~\cite{Zhou_13}. For motivation and clarity we now briefly describe the PET problem; for an expanded description we also refer the reader to~\cite[Section 1.1]{Zhao_20b} and \cite{BenTal_01}.   

PET is a medical imaging technique that measures the metabolic activities of human tissues and organs. In a typical setting radioactive materials are injected into the organ of interest, and these materials emit (radioactive) events 
that can be detected by PET scanners.  The mathematical model behind this process is described as follows. Suppose that an emission object (for example a human organ) has been discretized into $n$ voxels. The number of 
events emitted by voxel $i$ ($i\in[n]$) is a Poisson random variable $\tilde X_i$ with {\em unknown} mean $x_i \ge 0$, and so $\tilde X_i\sim {\sf Poiss}(x_i)$, and  furthermore $\{\tilde X_i\}_{i=1}^n$ are assumed to be independent.  We also have a scanner array comprised of $m$ bins. Each event emitted by voxel $i$ has a {\em known} probability $p_{ij}$ of being detected by bin $j$ ($j\in[m]$), and we assume that $\sum_{j=1}^m p_{ij} = 1$, i.e., the event will be detected by exactly one bin.  
Let $\tilde Y_j$ denote the total number of events detected by bin $j$, whereby
\begin{equation}\label{thursday01}
\mathbb{E} [\tilde Y_j ]:= y_j := \textstyle\sum_{i=1}^n p_{ij} x_i \ . 
\end{equation}
By Poisson thinning and superposition, it follows that $\{\tilde Y_j\}_{j=1}^m$ are independent random variables 
and $\tilde Y_j\sim {\sf Poiss}(y_j)$ for all $j\in[m]$.  

We seek to perform maximum-likelihood (ML) estimation of the unknown means $\{x_i\}_{i=1}^n$ based on observations $\{Y_j\}_{j=1}^m$of the random variables $\{\tilde Y_j\}_{j=1}^m$. From the model above, we easily see that the log-likelihood of observing $\{Y_j\}_{j=1}^m$ given $\{\tilde X_i\}_{i=1}^n$ is (up to some constants)
\begin{equation}
l(x):= - \textstyle\sum_{i=1}^n x_i + \textstyle\sum_{j=1}^mY_j\ln \big(\sum_{i=1}^n p_{ij} x_i\big) \ ,
\end{equation}
and therefore an ML estimate of $\{x_i\}_{i=1}^n$ is given by an optimal solution $x^*$ of 
\begin{equation}
{\max}_{x\ge 0} \;l(x) \ . \label{eq:PET_0}
\end{equation}
  %
It follows from the first-order optimality conditions that 
any optimal solution $x$ must satisfy 
\begin{equation}
\textstyle\sum_{i=1}^n x_i = S := \sum_{j=1}^m Y_j \ .  \label{eq:PET_extra_constraint}
\end{equation}
By incorporating \eqref{eq:PET_extra_constraint} into \eqref{eq:PET_0}, and re-scaling both the objective function $l$ and the optimization variable $x$ by a factor of $S^{-1}$,  
\eqref{eq:PET_0} can be equivalently written as 
\begin{equation}
{\max}_{z\in\Delta_n} \; \textstyle\sum_{j=1}^m p_j\ln \big(\sum_{i=1}^n p_{ij} z_i\big) \ , 
\label{eq:PET_final}
\end{equation} 
where $p_j:= Y_j/S$ for all $j\in[m]$.  The maximization problem \eqref{eq:PET_final} is easily seen to be an instance of \eqref{eq:P} with $a_j := (p_{j1}, \ldots, p_{jn})^\top$ for $j \in [m]$. 

\subsection{The Multiplicative Gradient (MG) Method}

One of the earliest algorithms for solving~\eqref{eq:P} is the MG method, which seems to be first proposed by information theorists in the 1970s for computing channel capacity and rate-distortion functions~\cite{Arimoto_72,Blahut_72,Csiszar_74}. The MG method is very simple and can be described as follows. Let $x^0\in\ri\Delta_n$ be the initial point, where $\ri\Delta_n:= \{x \in \bbR^n : x > 0, \sum_{i=1}^n x_i = 1\}$ denotes the relative interior of $\Delta_n$.  Then at each iteration $t \ge 0$ of the MG method we first compute the gradient of $f$ at $x^t$:
$$ \nabla_i f(x^t)= \sum_{j=1}^m p_j\frac{a_{ji}}{a_j^\top x^t} \  , \ \forall\,i \in [n] \ , $$ 
where $\nabla_i f(x^t)$ denotes the $i$-th entry of $\nabla f(x^t)$, and $a_{ji}$ denotes the $i$-th entry of $a_j$.  We construct the next iterate $x^{t+1}$ by simply multiplying the current iterate's coefficients by their respective gradient coefficients, namely
\begin{equation}
x^{t+1}_i := x^t_i \nabla_i f(x^t)    \ , \ \forall\, i\in[n] \ . \label{eq:MG}
\end{equation}
Let us make two immediate (but important) observations about the MG method in~\eqref{eq:MG}. Define the $m\times n$ (nonnegative) data matrix 
\begin{equation}
A:= [a_1 \;\;\cdots\;\; a_m]^\top = [A_1 \;\;\cdots\;\; A_n], \label{eq:A}
\end{equation}
(i.e., $a_j^\top$ denotes the $j$-th row of $A$ for $j\in[m]$ and $A_i$ denotes the $i$-th column of $A$ for $i\in[n]$), and let $\calI:= \{i\in[n]:A_i\ne 0\}$ denote the ``support pattern'' of the columns of $A$. 
Then we know that 
\begin{enumerate}[label = (O\arabic*)]
\item \label{item:feas}  the sequence $\{x^t\}_{t\ge 0}\subseteq \Delta_n$ (namely, $\{x^t\}_{t\ge 0}$ are feasible) and 
\item \label{item:supp} for all $t\ge 1$, $\calI(x^t) = \calI$, where 
$\calI(x):= \{i\in[n]:x_i\ne 0\}$ denotes the {\em support} of $x$ (for any $x\in\bbR^n$). 
\end{enumerate}
To see~\ref{item:feas}, note that $x^0\in \Delta_n$, and if $x^t\in \Delta_n$ for some $t\ge 0$, then $x^{t+1}\ge 0$ since $\nabla f(x^t)\ge 0$ and 
\begin{equation}
\sum_{i=1}^n x^{t+1}_i = \sum_{i=1}^n x^t_i \sum_{j=1}^m p_j\frac{a_{ji}}{a_j^\top x^t} = \sum_{j=1}^m p_j=1 \ .
\end{equation}
This shows that $x^{t+1}\in \Delta_n$. To see~\ref{item:supp}, note that $\calI(x^1) = \calI$ since $x^0>0$ and $\calI(\nabla f(x^0)) = \calI$, and if  $\calI(x^t) = \calI$ for some $t\ge 1$, then $a_j^\top x^t>0$ for all $j\in[m]$ and hence  $\calI(\nabla f(x^t)) = \calI$.  As a result, we have $\calI(x^{t+1}) = \calI$. 

%

\subsection{Motivation}\label{sec:mot}

As mentioned in Section~\ref{sec:app}, although the problem~\eqref{eq:P} appears to be structurally simple, it includes many important applications across a variety of fields. Unfortunately, due to the presence of $\ln(\cdot)$, the objective function $f$ is neither Lipschitz nor has Lipschitz gradient on the constraint set $\Delta_n$, and this prohibits us from applying most of the traditional first-order methods~\cite{Nemi_79,Nest_04} to solve~\eqref{eq:P}. 

In the literature, the standard choice for solving~\eqref{eq:P} is the MG method, and it has been long known that the sequence of iterates generated by the MG method has a {\em unique} limit point that is an optimal solution of~\eqref{eq:P}  (see e.g.,~\cite{Csiszar_84,Vardi_85,Iusem_92}). However, the convergence rate of this method has remained unclear for almost fifty years. Recently, some ``unconventional'' first-order methods have been proposed to minimize certain differentiable functions whose gradients are not Lipschitz on the constraint set. Among them, two methods are applicable to~\eqref{eq:P}, namely, the relatively-smooth gradient method (RSGM)~\cite{Bauschke_17,Lu_18} and the Frank-Wolfe method for convex composite optimization involving a logarithmically-homogeneous barrier (FW-LHB)~\cite{Dvu_20,Zhao_20b}.

Given the methods mentioned above, a natural question is to determine which method is ``best suited'' for solving~\eqref{eq:P}. Numerically, the extensive experimental results in~\cite[Section~4.2]{Zhao_20b} indicate that  the MG method {\em significantly and consistently outperforms} RSGM and  FW-LHB across different values of $m$ and $n$, and regardless of the location of the initial point $x^0$ (namely, whether $x^0$ lies at the center of $\Delta_n$ or lies close to the relative boundary of $\Delta_n$). The extraordinary numerical performance of the MG method is rather surprising and somewhat mysterious, for two reasons. First, the structure of the MG method is extremely simple. Indeed, compared with the other two methods, the MG method does not require selecting step-sizes, solving a projection sub-problem onto the constraint set $\Delta_n$, or solving a linear minimization sub-problem over $\Delta_n$. 
Second, unlike the other two methods with known convergence rates, the  MG method has only been known to converge asymptotically (i.e., without any convergence rate guarantees). 

The discussion above motivates us to investigate the computational guarantees of the MG method, with the hope that the results could provide theoretical justification for the superior numerical performance of the MG method. 



\subsection{Contributions}\label{sec:contributions}

Our contributions are summarized as follows. 

\begin{enumerate}[label=\roman*)]
\item We present the first (to our knowledge) convergence rate analysis for the MG method.  We show -- via a surprisingly simple proof -- that the MG method has an $O(\ln(n)/t)$ convergence rate in both the ergodic and the non-ergodic cases. (Recall that $n$ denotes the dimension of the optimization variable $x$.)
\item We provide an extremely short proof of the asymptotic convergence of $\{x^t\}_{t\ge 0}$ to an optimal solution of~\eqref{eq:P}, which is much simpler than the existing proofs in~\cite{Csiszar_84,Vardi_85,Iusem_92}. 
\item We show that the distance from $x^t$ to the set of optimal solutions of~\eqref{eq:P} converges to zero at rate $O(1/\sqrt{t})$, by constructing a quadratic error bound of~\eqref{eq:P}. 
\item We derive the computational complexities of RSGM and FW-LHB for finding an $\varepsilon$-optimal solution of~\eqref{eq:P} with initial point $x^0 = (1/n)e$, where $e:=(1,\ldots,1)\in\bbR^n$. 
This in turn enables us to conclude that  
the computational complexity of the MG method is always lower than that of RSGM, and is also lower than that of FW-LHB in the regime $n=O(\exp(m))$.  
\end{enumerate}




\section{Convergence Rate Analysis} \label{sec:conv_analysis}

In this section we present the convergence analysis of the MG method, where we will prove an $O(\ln(n)/t)$ convergence rate for both the non-ergodic and the ergodic cases. 

Before doing so, let us introduce some definitions and conventions. 
Let $\calX^*\ne \emptyset$ be the set of optimal solutions of \eqref{eq:P}, and let $x^*$ be any point in $\calX^*$, so that $f^*= f(x^*)$.  For any $x,y\ge 0$, define the Kullback-Leibler (KL) divergence 
\begin{equation}
D_{\rm KL}(y,x):=  \textstyle\sum_{i=1}^n y_i\ln(y_i/x_i) \ ,
\label{eq:def_KL}
\end{equation}
where we use the following conventions:
\begin{equation}
0\ln 0 := 0, \quad 0\ln (0/0) := 0 \quad\mbox{and} \quad a\ln (a/0) := +\infty,\quad \forall\,a>0.  \label{eq:convention} 
\end{equation}
Using these conventions and~\cite[Lemma~3]{Nest_05}, we know that  
\begin{equation}
D_{\rm KL}(y,x)\ge (1/2)\normt{y-x}_1^2\ge 0, \quad \forall\, y,x \in\Delta_n, \label{eq:lb_KL}
\end{equation}
where $\normt{\cdot}_1$ denotes the $\ell_1$-norm, namely, $\normt{x}_1:= \sum_{i=1}^n \abst{x_i}$ for  $x\in\bbR^n$. 
(To see why~\eqref{eq:lb_KL} holds, note that if the support $\calI(y)\not\subseteq\calI(x)$, then $D_{\rm KL}(y,x)=+\infty$ and~\eqref{eq:lb_KL} trivially holds; otherwise, we can restrict~\eqref{eq:lb_KL} to the ``sub-simplex'' $\Delta_{\abst{\calI(x)}}$ and apply~\cite[Lemma~3]{Nest_05}.)


 Our main convergence results are stated as follows:

\begin{theorem} \label{thm:O(1/t)}
Let $\{x^t\}_{t\ge 0}$ be the iterates of the MG method. Then for any $x^*\in\calX^*$ and all $t\ge 0$: 
\begin{enumerate}[label = (\roman*)]
\item Non-ergodic rate: $f^* - f(x^t) \le \frac{D_{\rm KL}(x^*,x^0)}{t+1}$, and   \label{item:non_ergodic}
\item Ergodic rate: $f^* - f(\barx^t) \le \frac{D_{\rm KL}(x^*,x^0)}{t+1}$, where $\barx^t:=\tfrac{1}{t+1}\sum_{k=0}^t x^k$. \qed \label{item:ergodic}

\end{enumerate}

\end{theorem}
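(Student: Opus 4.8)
The plan is to reduce everything to a single one-step ``mirror-descent''-type inequality,
\begin{equation}
f^* - f(x^t)\ \le\ D_{\rm KL}(x^*,x^t)-D_{\rm KL}(x^*,x^{t+1}),\qquad \forall\, t\ge 0, \label{eq:onestep}
\end{equation}
and then read off both rates. Summing \eqref{eq:onestep} over $k=0,\dots,t$ telescopes (using $D_{\rm KL}(x^*,x^{t+1})\ge 0$) to give $\sum_{k=0}^t\big(f^*-f(x^k)\big)\le D_{\rm KL}(x^*,x^0)$. Since $\barx^t$ is the average of $x^0,\dots,x^t$ and $f$ is concave, $f(\barx^t)\ge\tfrac1{t+1}\sum_{k=0}^t f(x^k)$, so $f^*-f(\barx^t)\le\tfrac1{t+1}\sum_{k=0}^t\big(f^*-f(x^k)\big)\le D_{\rm KL}(x^*,x^0)/(t+1)$, which is the ergodic bound. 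For the non-ergodic bound I would separately show that $\{f(x^t)\}_{t\ge 0}$ is nondecreasing; then $f(x^t)\ge f(x^k)$ for every $k\le t$, hence $(t+1)\big(f^*-f(x^t)\big)\le\sum_{k=0}^t\big(f^*-f(x^k)\big)\le D_{\rm KL}(x^*,x^0)$.

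The heart of the matter is \eqref{eq:onestep}. For any $x$ with $a_j^\top x>0$ for all $j$, set $w_{ji}(x):=a_{ji}x_i/(a_j^\top x)$, so that $\sum_i w_{ji}(x)=1$ for each $j$ and, by definition of the MG step, $\sum_j p_j w_{ji}(x^t)=x_i^t\nabla_i f(x^t)=x_i^{t+1}$. The second ingredient is the first-order optimality condition for \eqref{eq:P} at $x^*$: the multiplier equals $\langle \nabla f(x^*),x^*\rangle=1$, so $\nabla_i f(x^*)\le 1$ for all $i$ with equality whenever $x_i^*>0$; hence $\sum_j p_j w_{ji}(x^*)=x_i^*\nabla_i f(x^*)=x_i^*$ for every $i$. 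Substituting $x_i^*=\sum_j p_j w_{ji}(x^*)$ into $D_{\rm KL}(x^*,x^t)=\sum_i x_i^*\ln(x_i^*/x_i^t)$ and $1=\sum_i w_{ji}(x^*)$ into $f^*-f(x^t)=\sum_j p_j\ln\tfrac{a_j^\top x^*}{a_j^\top x^t}$, and using $w_{ji}(x^*)/w_{ji}(x^t)=x_i^*(a_j^\top x^t)/\big(x_i^t(a_j^\top x^*)\big)$, a one-line rearrangement yields the exact identity
\begin{equation}
D_{\rm KL}(x^*,x^t)-\big(f^*-f(x^t)\big)=\sum_{j=1}^m p_j\, D_{\rm KL}\big(w_{j\cdot}(x^*),\,w_{j\cdot}(x^t)\big). \label{eq:klidentity}
\end{equation}
By the joint convexity of the KL divergence, the right-hand side is at least $D_{\rm KL}\big(\sum_j p_j w_{j\cdot}(x^*),\,\sum_j p_j w_{j\cdot}(x^t)\big)=D_{\rm KL}(x^*,x^{t+1})$, and combining this with \eqref{eq:klidentity} is exactly \eqref{eq:onestep}.

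The monotonicity of $f(x^t)$ comes from the same bookkeeping with $x^{t+1}$ in the role of $x^*$: writing $\tfrac{a_j^\top x^{t+1}}{a_j^\top x^t}=\sum_i w_{ji}(x^t)\tfrac{x_i^{t+1}}{x_i^t}$ and applying Jensen's inequality (concavity of $\ln$) gives $f(x^{t+1})-f(x^t)\ge\sum_j p_j\sum_i w_{ji}(x^t)\ln\tfrac{x_i^{t+1}}{x_i^t}=\sum_i x_i^{t+1}\ln\tfrac{x_i^{t+1}}{x_i^t}=D_{\rm KL}(x^{t+1},x^t)\ge 0$. Before any of this I would record the well-posedness facts that keep all quantities finite and the ratios meaningful: $f^*>-\infty$ forces $a_j^\top x^*>0$ for all $j$ (so $f$ is differentiable at $x^*$ and the KKT conditions apply), $x^0\in\ri\Delta_n$ makes $D_{\rm KL}(x^*,x^0)<\infty$, and $\supp(x^*)\subseteq\calI=\calI(x^t)$ for $t\ge 1$ (and $\subseteq[n]$ for $t=0$) so that $w_{ji}(x^t)$ and the terms in \eqref{eq:klidentity} are legitimate.

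The step I expect to be the crux is pinning down \eqref{eq:klidentity}. The naive route — bounding $f^*-f(x^t)=\sum_j p_j\ln\tfrac{a_j^\top x^*}{a_j^\top x^t}$ directly against $\sum_i x_i^*\ln\nabla_i f(x^t)=D_{\rm KL}(x^*,x^t)-D_{\rm KL}(x^*,x^{t+1})$ via Jensen — fails, because both quantities are dominated by the same $\ln\langle\nabla f(x^t),x^*\rangle$ and the resulting inequalities point the wrong way. What rescues the argument is that the gap between them is not merely nonnegative ``by some inequality'' but is \emph{identically} the $p$-averaged KL term in \eqref{eq:klidentity}, and isolating this identity rests squarely on substituting the optimality relation $\sum_j p_j w_{ji}(x^*)=x_i^*$. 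Once \eqref{eq:klidentity} is established, the rest (telescoping, joint convexity of KL, concavity of $f$, and the one-line monotonicity estimate) is routine.
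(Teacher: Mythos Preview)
Your argument is correct and lands on the same one-step inequality $f^*-f(x^t)\le D_{\rm KL}(x^*,x^t)-D_{\rm KL}(x^*,x^{t+1})$ as the paper, but the route to it is genuinely different. The paper observes directly that $D_{\rm KL}(x^*,x^t)-D_{\rm KL}(x^*,x^{t+1})=\sum_i x_i^*\ln\nabla_i f(x^t)$ (since $x_i^{t+1}/x_i^t=\nabla_i f(x^t)$), and then proves $\sum_i x_i^*\ln\nabla_i f(x^t)\ge f^*-f(x^t)$ by a single application of Jensen: for each $i$ with $x_i^*>0$, the KKT relation $\nabla_i f(x^*)=1$ says the numbers $p_ja_{ji}/(a_j^\top x^*)$ form a convex combination over $j$, so
\[
\ln\nabla_i f(x^t)\;=\;\ln\sum_{j}\frac{p_ja_{ji}}{a_j^\top x^*}\cdot\frac{a_j^\top x^*}{a_j^\top x^t}\ \ge\ \sum_{j}\frac{p_ja_{ji}}{a_j^\top x^*}\ln\frac{a_j^\top x^*}{a_j^\top x^t},
\]
and summing against $x_i^*$ collapses the right-hand side to $\sum_j p_j\ln\tfrac{a_j^\top x^*}{a_j^\top x^t}=f^*-f(x^t)$. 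This is exactly the ``direct Jensen'' comparison you dismissed as pointing the wrong way --- the trick is to apply Jensen \emph{inside} each $\ln\nabla_i f(x^t)$ using the $j$-weights supplied by optimality, rather than to the outer $i$-average against $x^*$. Your path instead establishes the exact identity expressing $D_{\rm KL}(x^*,x^t)-(f^*-f(x^t))$ as a $p$-mixture of KL divergences between the weight vectors $w_{j\cdot}(x^*)$ and $w_{j\cdot}(x^t)$, and then invokes joint convexity of $D_{\rm KL}$ to bound it below by $D_{\rm KL}(x^*,x^{t+1})$. The paper in fact mentions this alternative (attributing it to Iusem, under an extra normalization hypothesis) and explicitly remarks that it is ``quite different'' from its own proof. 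Your version has the merit of producing an identity rather than just an inequality, at the cost of a slightly heavier setup; the paper's version is shorter. The telescoping, the monotonicity lemma, and the ergodic/non-ergodic endgame are identical in both.
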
 

Before proving Theorem~\ref{thm:O(1/t)}, let us first state an important corollary. 

\begin{corollary}\label{cor:data_indep_rate}
Let $\{x^t\}_{t\ge 0}$ be the iterates of the MG method. The for all $t\ge 0$, we have  
\begin{equation}
f^* - f(x^t) \le \frac{-\ln(x^0_{\min})}{t+1} \quad \mbox{and}\quad f^* - f(\barx^t) \le \frac{-\ln(x^0_{\min})}{t+1} \ . \label{eq:rate_data_indep}
\end{equation}
Consequently, if we choose $x^0 = (1/n)e$, 
then for all $t\ge 0$, we have 
\begin{equation}
f^* - f(x^t) \le \frac{\ln(n)}{t+1} \quad \mbox{and}\quad f^* - f(\barx^t) \le \frac{\ln(n)}{t+1} \ . \label{eq:rate_data_indep_ln(n)}
\end{equation}
\end{corollary}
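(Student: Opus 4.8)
The plan is to reduce the corollary to a single data-independent bound on the KL term that already appears on the right-hand side of Theorem~\ref{thm:O(1/t)}. Since both inequalities in \eqref{eq:rate_data_indep} share the same right-hand side, and Theorem~\ref{thm:O(1/t)} already supplies $f^*-f(x^t)\le D_{\rm KL}(x^*,x^0)/(t+1)$ together with its ergodic counterpart, it suffices to prove the single estimate
\[
D_{\rm KL}(x^*,x^0)\le -\ln(x^0_{\min}),
\]
valid for every $x^*\in\calX^*$ and every $x^0\in\ri\Delta_n$, where $x^0_{\min}:=\min_{i\in[n]}x^0_i$. The specialization in \eqref{eq:rate_data_indep_ln(n)} will then follow by plugging in $x^0=(1/n)e$.

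First I would check that the KL term is finite and the split below is legitimate. Because $x^0\in\ri\Delta_n$ we have $x^0>0$, hence $\calI(x^0)=[n]\supseteq\calI(x^*)$, so no coordinate triggers the $a\ln(a/0):=+\infty$ convention from \eqref{eq:convention}, and $x^0_{\min}>0$ so $-\ln(x^0_{\min})$ is well-defined (indeed nonnegative, since $x^0_{\min}\le 1/n\le 1$, consistent with $D_{\rm KL}\ge 0$). Next I would expand the divergence via its definition \eqref{eq:def_KL} into an entropy part and a cross part,
\[
D_{\rm KL}(x^*,x^0)=\sum_{i=1}^n x^*_i\ln x^*_i \;-\; \sum_{i=1}^n x^*_i\ln x^0_i,
\]
using $0\ln 0:=0$ for the coordinates where $x^*_i=0$.

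The two terms are then bounded separately. For the first, each summand obeys $x^*_i\ln x^*_i\le 0$ because $x^*_i\in[0,1]$, so $\sum_{i=1}^n x^*_i\ln x^*_i\le 0$. For the second, the elementary monotonicity bound $x^0_i\ge x^0_{\min}$ gives $-\ln x^0_i\le -\ln x^0_{\min}$ for every $i$; multiplying by $x^*_i\ge 0$, summing, and using feasibility $\sum_{i=1}^n x^*_i=1$ (since $x^*\in\Delta_n$) yields
\[
-\sum_{i=1}^n x^*_i\ln x^0_i\le -\ln(x^0_{\min})\sum_{i=1}^n x^*_i = -\ln(x^0_{\min}).
\]
Adding the two bounds establishes the claimed estimate, which combined with Theorem~\ref{thm:O(1/t)} gives \eqref{eq:rate_data_indep}.

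Finally, the specialization is immediate: taking $x^0=(1/n)e$ forces $x^0_{\min}=1/n$, whence $-\ln(x^0_{\min})=\ln n$, and \eqref{eq:rate_data_indep} becomes \eqref{eq:rate_data_indep_ln(n)}. There is no genuine obstacle in this argument; the only points requiring care are the bookkeeping with the conventions in \eqref{eq:convention} on the coordinates where $x^*_i=0$, and confirming that the full support of $x^0$ (from $x^0\in\ri\Delta_n$) keeps $D_{\rm KL}(x^*,x^0)$ finite, so that the split and both coordinatewise bounds are valid.
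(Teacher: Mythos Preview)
Your argument is correct. Both you and the paper reduce the corollary to the single uniform bound $D_{\rm KL}(x^*,x^0)\le -\ln(x^0_{\min})$ and then invoke Theorem~\ref{thm:O(1/t)}, so the overall strategy is the same. The difference is in how that bound is obtained: you split $D_{\rm KL}(x^*,x^0)$ into the negative-entropy term and the cross term and bound each coordinatewise, whereas the paper observes that $D_{\rm KL}(\cdot,x^0)$ is convex, so its maximum over $\Delta_n$ is attained at a vertex $e_i$, giving $\max_{x\in\Delta_n} D_{\rm KL}(x,x^0)=\max_{i\in[n]}(-\ln x^0_i)=-\ln(x^0_{\min})$. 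The paper's route is a touch slicker and yields the sharper statement that $-\ln(x^0_{\min})$ is in fact the exact maximum of the KL term over the simplex; your route is slightly more elementary in that it avoids appealing to the vertex-maximum principle for convex functions. Either way the proof goes through without difficulty.
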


\begin{proof}
Since $D_{\rm KL}(\cdot,x^0)$ is convex for any $x^0\in\ri\Delta_n$, we have
\begin{equation}
{\max}_{x\in\Delta_n}\; D_{\rm KL}(x,x^0) = {\max}_{i\in[n]}\; D_{\rm KL}(e_i,x^0) = {\max}_{i\in[n]}\; -\ln(x^0_i) = -\ln(x^0_{\min})\ , \nn 
\end{equation}
where $e_i$ is the $i$-th standard coordinate vector in $\bbR^n$ for $i\in[n]$. Then based on Theorem~\ref{thm:O(1/t)}, we arrive at~\eqref{eq:rate_data_indep}. This completes the proof.  
\end{proof}

\begin{remark} 
Note that in Theorem~\ref{thm:O(1/t)}, we provide data-dependent convergence rates --- this is because $D_{\rm KL}(x^*,x^0)$ depends on the optimal solution $x^*\in\calX^*$, which in turn depends on the data matrix $A$ (cf.~\eqref{eq:A}). In contrast, in Corollary~\ref{cor:data_indep_rate} we provide data-independent convergence rates. 
  In fact, these rates only depend on the minimum element of the starting point $x^0$. From~\eqref{eq:rate_data_indep}, it is clear that the optimal choice of $x^0$ should be $(1/n)e$, which yields $O(\ln(n)/t)$ convergence rates in both the non-ergodic and the ergodic cases. 
\end{remark}

Towards the task of proving Theorem \ref{thm:O(1/t)}, we begin our analysis with the following simple but important observations about \eqref{eq:P}. 

\begin{lemma}\label{lem:KKT}
There exists $\nu\in\bbR_+^n$ such that $\nabla_i f(x^*) + \nu_i = 1$ and $\nu_ix^*_i=0$, for all $i\in[n]$. In particular, if $x^*_i>0$ for some $i\in[n]$, then $\nabla_i f(x^*)=1$.  
\end{lemma}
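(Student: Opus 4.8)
The plan is to derive the stated conditions directly from the first-order (KKT) optimality conditions for the constrained maximization problem~\eqref{eq:P}. Since $x^*$ maximizes the concave function $f$ over the simplex $\Delta_n$, which has the representation $\{x \ge 0 : e^\top x = 1\}$, I would write down the Lagrangian $L(x,\lambda,\nu) = f(x) + \lambda(1 - e^\top x) + \nu^\top x$ with multipliers $\lambda \in \bbR$ for the equality constraint and $\nu \in \bbR_+^n$ for the nonnegativity constraints. The stationarity condition gives $\nabla_i f(x^*) - \lambda + \nu_i = 0$ for all $i \in [n]$, together with complementary slackness $\nu_i x^*_i = 0$. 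It remains to pin down $\lambda = 1$.

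To identify $\lambda$, I would take the inner product of the stationarity condition with $x^*$: summing $x^*_i(\nabla_i f(x^*) - \lambda + \nu_i) = 0$ over $i$ and using $\nu_i x^*_i = 0$ yields $\langle \nabla f(x^*), x^*\rangle = \lambda \sum_i x^*_i = \lambda$, since $x^* \in \Delta_n$. Now the left-hand side is $\sum_{i=1}^n x^*_i \sum_{j=1}^m p_j \frac{a_{ji}}{a_j^\top x^*} = \sum_{j=1}^m p_j \frac{a_j^\top x^*}{a_j^\top x^*} = \sum_{j=1}^m p_j = 1$ (using $a_j^\top x^* > 0$ for all $j$, which holds because $x^* \in \dom f$, and the normalization $\sum_j p_j = 1$). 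Hence $\lambda = 1$, which gives the first two claims, and the final sentence follows immediately: if $x^*_i > 0$ then $\nu_i = 0$ by complementary slackness, so $\nabla_i f(x^*) = 1$.

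One point requiring a word of care is the validity of the KKT conditions: $f$ is concave but not differentiable everywhere on $\bbR_+^n$ (it has value $-\infty$ off $\dom f$), so I should note that the argument takes place on the relative interior of $\dom f \cap \Delta_n$ where $f$ is smooth, and that a constraint qualification (e.g.\ the linear constraints qualify automatically, or Slater holds since $\dom f \cap \ri\Delta_n \ne \emptyset$ by the well-posedness assumption $a_j \ne 0$) guarantees existence of the multipliers. Alternatively, and perhaps cleaner, I would avoid invoking a general KKT theorem and argue directly: for any feasible direction $d$ with $x^* + s d \in \Delta_n$ for small $s > 0$, concavity and optimality give $\langle \nabla f(x^*), d \rangle \le 0$; applying this to directions $d = e_i - x^*$ (feasible whenever moving toward vertex $i$ is allowed) and $d = e_k - e_i$ for $i$ with $x^*_i > 0$ shows $\nabla_i f(x^*)$ is constant on $\supp(x^*)$ and dominates $\nabla_k f(x^*)$ off the support; combined with the identity $\langle \nabla f(x^*), x^*\rangle = 1$ this forces that common value to be $1$ and yields $\nu$. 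I expect the main (minor) obstacle to be presenting this cleanly while handling the non-differentiability/$\dom f$ issue, but since $\calI(x^*) \subseteq \calI$ and $a_j^\top x^* > 0$ this is routine.
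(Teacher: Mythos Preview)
Your proposal is correct and follows essentially the same route as the paper: write the KKT stationarity condition $\nabla f(x^*) - \lambda e + \nu = 0$ with $\nu \ge 0$ and complementary slackness, then take the inner product with $x^*$ and use the identity $\langle \nabla f(x^*), x^*\rangle = \sum_j p_j = 1$ to pin down $\lambda = 1$. The paper does exactly this (with the opposite sign convention on $\lambda$), though it simply asserts that ``the KKT conditions are necessary and sufficient'' without the constraint-qualification discussion you add; your extra paragraph on Slater/linear constraints and the alternative feasible-direction argument is more careful than the paper but not needed for matching it.
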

\begin{proof}
First, let us observe that for any $x\in\Delta_n$ we have
\begin{equation}
\lranglet{\nabla f(x)}{x} =  \sum_{i=1}^n x_i \sum_{j=1}^m p_j\frac{a_{ji}}{a_j^\top x} = \sum_{j=1}^m p_j = 1 \ , \label{eq:log_hom}
\end{equation}
where $a_{ji}$ denotes the $i$-th entry of $a_j$.  
The KKT conditions are necessary and sufficient for optimality for \eqref{eq:P}, and thus there exists $\lambda\in\bbR$ and $\nu\in\bbR_+^n$ such that $\nabla f(x^*) + \lambda e + \nu = 0$, and $\nu_ix^*_i=0$ for all $i\in[n]$ (where recall that $e$ denotes the vector of ones). Consequently
\begin{align*}
1+\lambda=\lranglet{\nabla f(x^*)}{x^*} + \lambda \lranglet{e}{x^*} + \lranglet{\nu}{x^*} = 0 \ ,
\end{align*}
which implies that $ \lambda = -1 $ and hence $\nabla f(x^*)  + \nu = e$. 
\end{proof}

The next lemma, which is due to Cover~{\cite[Theorem~1]{Cover_84}}, presents a lower bound on the improvement of the objective value at each iteration of the MG method. For completeness, we include a short proof in Appendix~\ref{app:proof}.

\begin{lemma}[Cover~{\cite[Theorem~1]{Cover_84}}]\label{lem:monotonicity}
For all $t\ge 0$ we have 
\begin{equation}
f(x^{t+1}) - f(x^t)\ge D_{\rm KL}(x^{t+1},x^{t})\ge 0 \ . \tag*{\qed} 
\end{equation}
\end{lemma}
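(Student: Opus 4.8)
The plan is to obtain the first inequality $f(x^{t+1}) - f(x^t) \ge D_{\rm KL}(x^{t+1},x^t)$ from a single application of Jensen's inequality (concavity of $\ln$), after rewriting the objective improvement as a sum of logarithms of convex combinations of the gradient coordinates; the second inequality $D_{\rm KL}(x^{t+1},x^t)\ge 0$ is then immediate from \eqref{eq:lb_KL}, since by observation~\ref{item:feas} both $x^t$ and $x^{t+1}$ lie in $\Delta_n$.

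First I would record that $a_j^\top x^t > 0$ for every $j\in[m]$ and every $t\ge 0$: for $t=0$ this holds because $x^0\in\ri\Delta_n$ and $a_j\ge 0$, $a_j\ne 0$, and for $t\ge 1$ it follows from observation~\ref{item:supp}. Hence $\nabla f(x^t)$ and all the ratios below are well-defined. Writing out the objective and using the MG update $x^{t+1}_i = x^t_i\nabla_i f(x^t)$, I would express
$$f(x^{t+1}) - f(x^t) = \sum_{j=1}^m p_j \ln\frac{a_j^\top x^{t+1}}{a_j^\top x^t} = \sum_{j=1}^m p_j \ln\Big(\sum_{i=1}^n w_{ji}\,\nabla_i f(x^t)\Big), \qquad w_{ji} := \frac{a_{ji}\,x^t_i}{a_j^\top x^t},$$
where for each fixed $j$ the weights $\{w_{ji}\}_{i=1}^n$ are nonnegative and sum to $1$ (because $\sum_i a_{ji}x^t_i = a_j^\top x^t$).

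Next I would apply Jensen's inequality termwise, $\ln\big(\sum_i w_{ji}\nabla_i f(x^t)\big) \ge \sum_i w_{ji}\ln(\nabla_i f(x^t))$, then sum against the weights $p_j$ and interchange the two (finite) sums:
$$f(x^{t+1}) - f(x^t) \ge \sum_{i=1}^n \ln(\nabla_i f(x^t)) \sum_{j=1}^m p_j w_{ji} = \sum_{i=1}^n x^{t+1}_i \ln(\nabla_i f(x^t)),$$
using $\sum_{j} p_j w_{ji} = x^t_i\sum_j p_j a_{ji}/(a_j^\top x^t) = x^t_i\nabla_i f(x^t) = x^{t+1}_i$. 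Since $\nabla_i f(x^t) = x^{t+1}_i/x^t_i$ whenever $x^t_i>0$, and the terms with $x^t_i = 0$ (hence also $x^{t+1}_i = 0$) vanish under the conventions in \eqref{eq:convention}, the right-hand side is exactly $D_{\rm KL}(x^{t+1},x^t)$, which gives the claim.

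I do not expect any serious obstacle here; the only points needing care are ensuring all denominators $a_j^\top x^t$ are strictly positive so the manipulations are legitimate, and the bookkeeping of the zero coordinates so that the last sum really equals $D_{\rm KL}(x^{t+1},x^t)$ under the stated conventions. The conceptual content is simply the ``change of variables'' that turns $(a_j^\top x^{t+1})/(a_j^\top x^t)$ into an average of $\nabla_i f(x^t)$ over the probability weights $w_{j\cdot}$, after which Jensen's inequality and a Fubini-type swap of the two sums do all the work.
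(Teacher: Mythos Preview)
Your proposal is correct and follows essentially the same argument as the paper's proof: both rewrite $(a_j^\top x^{t+1})/(a_j^\top x^t)$ as a convex combination with weights $w_{ji}=a_{ji}x_i^t/(a_j^\top x^t)$, apply the concavity of $\ln(\cdot)$ (Jensen), and then swap the sums to identify the result with $D_{\rm KL}(x^{t+1},x^t)$. The only cosmetic difference is that the paper writes the inner terms as $x_i^{t+1}/x_i^t$ whereas you write them as $\nabla_i f(x^t)$, which are equal by the MG update; your extra remarks on positivity of $a_j^\top x^t$ and handling of zero coordinates are welcome bookkeeping the paper leaves implicit.
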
 

%

For convenience, let us define the objective gap at $x\in\Delta_n$ as $\delta(x) := f^* - f(x)$; then 
from Lemma~\ref{lem:monotonicity} we see that $\{\delta(x^t)\}_{t\ge 0}$ is a monotonically 
non-increasing sequence.

\begin{lemma} \label{lem:ub_gap}
For any $x\in\Delta_n$ we have 
\begin{equation}
\delta(x)\le \textstyle\sum_{i=1}^n x_i^*\ln\left(\nabla_i f(x)\right). 
\end{equation}
\begin{proof}
Define $\calI^*:= \{i\in[n]: x_i^*>0\}$, and from Lemma~\ref{lem:KKT} it follows that for any $i \in \calI^*$ we have
\begin{equation}
\sum_{j=1}^m \frac{p_ja_{ji}}{a_j^\top x^*} =\nabla_i f(x^*) = 1 \ . \label{eq:cvx_coeff}
\end{equation}
As a result we have
\begin{align*}
\sum_{i=1}^n x_i^*\ln\left(\nabla_i f(x)\right)&= \sum_{i\in\calI^*} x_i^*\ln\left(\nabla_i f(x)\right)\\
&= \sum_{i\in\calI^*} x_i^*\ln\left(\sum_{j=1}^m \frac{p_ja_{ji}}{a_j^\top x^*}\frac{a_j^\top x^*}{a_j^\top x}\right)\\
&\ge \sum_{i\in\calI^*} x_i^* \sum_{j=1}^m \frac{p_ja_{ji}}{a_j^\top x^*}\ln\left(\frac{a_j^\top x^*}{a_j^\top x}\right)\\
&= \sum_{j=1}^m p_j\ln\left(\frac{a_j^\top x^*}{a_j^\top x}\right)\\
&= f(x^*) - f(x) = \delta(x) \ ,  \nt \label{eq:lb_log_grad}
\end{align*}
where the inequality above uses~\eqref{eq:cvx_coeff} and the concavity of $\ln(\cdot)$.
\end{proof}
\end{lemma}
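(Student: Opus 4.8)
The plan is to prove the inequality by bounding the right-hand side $\sum_{i=1}^n x_i^*\ln(\nabla_i f(x))$ from below and showing that this lower bound equals exactly $\delta(x)=f(x^*)-f(x)$; the engine driving the argument is the stationarity condition supplied by Lemma~\ref{lem:KKT}. First I would discard all indices outside the support $\calI^*:=\{i\in[n]:x_i^*>0\}$, since the terms with $x_i^*=0$ contribute nothing to the sum. This is precisely the set on which Lemma~\ref{lem:KKT} guarantees $\nabla_i f(x^*)=1$, i.e. $\sum_{j=1}^m \frac{p_j a_{ji}}{a_j^\top x^*}=1$ for every $i\in\calI^*$.

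The crucial observation is that, for each fixed $i\in\calI^*$, the numbers $\{\frac{p_j a_{ji}}{a_j^\top x^*}\}_{j=1}^m$ are nonnegative and sum to $1$, hence form a probability distribution over $j\in[m]$. I would therefore rewrite each gradient coefficient as $\nabla_i f(x)=\sum_{j=1}^m \frac{p_j a_{ji}}{a_j^\top x}=\sum_{j=1}^m \big(\frac{p_j a_{ji}}{a_j^\top x^*}\big)\frac{a_j^\top x^*}{a_j^\top x}$, deliberately inserting the factor $a_j^\top x^*$ so that the convex weights become visible, and then apply Jensen's inequality to the concave function $\ln(\cdot)$ to obtain $\ln(\nabla_i f(x))\ge \sum_{j=1}^m \frac{p_j a_{ji}}{a_j^\top x^*}\ln\!\left(\frac{a_j^\top x^*}{a_j^\top x}\right)$.

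The remainder is bookkeeping with finite sums. Multiplying by $x_i^*$, summing over $i\in\calI^*$, and interchanging the order of summation, the inner sum becomes $\sum_{i\in\calI^*} x_i^* a_{ji}=\sum_{i=1}^n x_i^* a_{ji}=a_j^\top x^*$, which exactly cancels the $(a_j^\top x^*)^{-1}$ prefactor. Everything then collapses into $\sum_{j=1}^m p_j\ln\!\big(\frac{a_j^\top x^*}{a_j^\top x}\big)=f(x^*)-f(x)=\delta(x)$, yielding the claim.

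The one genuinely delicate point — the step I would treat most carefully — is the justification of the Jensen inequality uniformly across $\calI^*$, which hinges entirely on the normalization $\sum_j \frac{p_j a_{ji}}{a_j^\top x^*}=1$ from Lemma~\ref{lem:KKT}: it is exactly the fact that these weights form an honest probability vector that makes the concavity inequality point in the direction I need. I would also verify well-definedness, namely that $a_j^\top x^*>0$ for every $j$ (which holds since $f(x^*)$ is finite) so the ratios $\frac{a_j^\top x^*}{a_j^\top x}$ make sense. Everything else is a straightforward rearrangement.
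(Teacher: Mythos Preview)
Your proposal is correct and follows essentially the same approach as the paper: restrict to the support $\calI^*$, use Lemma~\ref{lem:KKT} to recognize the weights $\{p_j a_{ji}/(a_j^\top x^*)\}_{j=1}^m$ as a probability vector, apply Jensen's inequality for $\ln(\cdot)$, and then interchange sums so that $\sum_{i\in\calI^*} x_i^* a_{ji}=a_j^\top x^*$ cancels the denominator. The only addition is your explicit remark that $a_j^\top x^*>0$ (ensuring the ratios are well-defined), which the paper leaves implicit.
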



Equipped with the above lemmas, we now prove Theorem \ref{thm:O(1/t)}.  

\begin{proof}[Proof of Theorem~\ref{thm:O(1/t)}]
For any $x^*\in\calX^*$, we have 
\begin{align}
D_{\rm KL}(x^*,x^t) - D_{\rm KL}(x^*,x^{t+1}) &= \sum_{i=1}^n x_i^*\ln\left(\frac{x^{t+1}_i}{x^t_i}\right) = \sum_{i=1}^n x_i^*\ln\left(\nabla_i f(x^t)\right)\ge \delta(x^t) \ , \label{eq:dec_KL}
\end{align}
where the inequality on the right follows from Lemma~\ref{lem:ub_gap}. 
Telescoping over $k= 0,\ldots,t$, we obtain
\begin{align*}
D_{\rm KL}(x^*,x^0) \ge D_{\rm KL}(x^*,x^0) - D_{\rm KL}(x^*,x^{\textcolor{black}{t+1}})\ge \textstyle \sum_{k=0}^{t} \delta(x^k) \ . 
\end{align*}
Using the convexity of $\delta(\cdot)$, we have $\sum_{k=0}^{t} \delta(x^k)\ge (t+1)\delta(\barx^t)$, which proves part~\ref{item:ergodic}. Alternatively, using the property that $\{\delta(x^t)\}_{t\ge 0}$ is a non-increasing sequence, we have $\sum_{k=0}^{t} \delta(x^k)\ge (t+1)\delta(x^t)$, which then proves part~\ref{item:non_ergodic}. 
\end{proof}

\begin{remark} \label{rmk:Iusem}
Under the ``normalization'' assumption that $\sum_{j=1}^m a_{ji} = 1$ for all $i\in[n]$, Iusem~\cite[Lemma 2.2]{Iusem_92} showed a recursion that is slightly more general than~\eqref{eq:dec_KL}.
In fact, the proof technique in~\cite{Iusem_92} mainly leverages the joint convexity of the KL-divergence, which is quite different from our proof above. In addition, it is not clear (to us) if the technique in \cite{Iusem_92} can still be applied in the general setting where the normalization assumption is absent. 
\end{remark}

Next, we present an extremely short proof of the asymptotic convergence of $\{x^t\}_{t\ge 0}$ to some $x^*\in\calX^*$, which is much simpler than the existing proofs in~\cite{Csiszar_84,Vardi_85,Iusem_92}.

\begin{corollary} \label{cor:asymp}
There exists some $x^*\in\calX^*$ such that  
$x^t\to x^*$. 
\end{corollary}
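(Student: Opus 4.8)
The plan is a Fej\'er-monotonicity (Opial-type) argument anchored on the recursion \eqref{eq:dec_KL}. First record two facts. (a) $\delta(x^t)\to 0$: this follows from part~\ref{item:non_ergodic} of Theorem~\ref{thm:O(1/t)} together with $D_{\rm KL}(x^*,x^0)<\infty$ (valid since $x^0\in\ri\Delta_n$ forces $\calI(x^0)=[n]\supseteq\calI(x^*)$), or equally well by summing \eqref{eq:dec_KL} over all $t\ge 0$. (b) For \emph{every} fixed $x^*\in\calX^*$, \eqref{eq:dec_KL} shows that $\{D_{\rm KL}(x^*,x^t)\}_{t\ge 0}$ is non-increasing; being bounded below by $0$, it converges.

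Next, extract a limit point. Since $\{x^t\}_{t\ge 0}\subseteq\Delta_n$ and $\Delta_n$ is compact, some subsequence satisfies $x^{t_k}\to\bar x\in\Delta_n$. I claim $\bar x\in\calX^*$. The objective $f$ is upper semicontinuous on $\Delta_n$: each $x\mapsto\ln(a_j^\top x)$ (with the convention $\ln 0=-\infty$) is the composition of the continuous linear map $x\mapsto a_j^\top x\in[0,\infty)$ with the continuous nondecreasing extended-real map $\ln\colon[0,\infty)\to[-\infty,\infty)$, and $f$ is a nonnegative combination of these. Hence $f(\bar x)\ge\limsup_{k\to\infty}f(x^{t_k})=f^*-\lim_{k\to\infty}\delta(x^{t_k})=f^*$, while $f(\bar x)\le f^*$ because $\bar x\in\Delta_n$; thus $\bar x\in\calX^*$. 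Now apply fact~(b) with the specific choice $x^*=\bar x$: $\{D_{\rm KL}(\bar x,x^t)\}_{t\ge 0}$ converges, say to $L\ge 0$. Along the subsequence, $D_{\rm KL}(\bar x,x^{t_k})=\sum_{i\in\calI(\bar x)}\bar x_i\ln(\bar x_i/x_i^{t_k})\to 0$, since each of the finitely many terms with $\bar x_i>0$ tends to $\bar x_i\ln 1=0$ (using $x_i^{t_k}\to\bar x_i>0$) and the terms with $\bar x_i=0$ vanish by the conventions in \eqref{eq:convention} (recall $x_i^{t_k}>0$ for $t_k\ge 1$). Hence $L=0$, i.e.\ $D_{\rm KL}(\bar x,x^t)\to 0$, and then \eqref{eq:lb_KL} gives $\tfrac12\normt{x^t-\bar x}_1^2\le D_{\rm KL}(\bar x,x^t)\to 0$, so $x^t\to\bar x=:x^*$.

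I expect the one genuinely delicate step to be the identification $\bar x\in\calX^*$: because $f$ (equivalently $\delta$) is merely upper semicontinuous and extended-real-valued on $\Delta_n$ rather than continuous, one must argue with care that a limit point of a maximizing sequence is again a maximizer. Once that is secured, the Fej\'er monotonicity evaluated at the particular optimal point $x^*=\bar x$ pins the entire sequence to $\bar x$ with no further difficulty, so this is really the only place where subtlety enters.
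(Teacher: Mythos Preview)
Your proof is correct and follows essentially the same Fej\'er/Opial-type argument as the paper: extract a cluster point $\bar x$ by compactness, show $\bar x\in\calX^*$, then use the monotonicity of $\{D_{\rm KL}(\bar x,x^t)\}_{t\ge 0}$ from~\eqref{eq:dec_KL} together with $D_{\rm KL}(\bar x,x^{t_k})\to 0$ along the subsequence to force $D_{\rm KL}(\bar x,x^t)\to 0$, and conclude via~\eqref{eq:lb_KL}. The only difference is that you are more explicit about the step $\bar x\in\calX^*$, arguing via upper semicontinuity of $f$ (in fact $f$ is continuous on $\Delta_n$ as an extended-real-valued function), whereas the paper simply asserts this from $f(x^{t_l})\to f^*$; your added care there is justified but not strictly needed.
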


\begin{proof}
Since $\Delta_n$ is compact, there exists a sub-sequence of $\{x^t\}_{t\ge 0}$, which we denote by $\{x^{t_l}\}_{l\ge 0}$, that converges to some $\barx\in\Delta_n$. 
Using the conventions in~\eqref{eq:convention}, it is clear that $D_{\rm KL}(\barx,x^{t_l})\to 0$ (as $l \to +\infty$). 
Since $f(x^t)\to f^*$ (cf.\ Theorem~\ref{thm:O(1/t)}\ref{item:non_ergodic}), we see that $f(x^{t_l})\to f^*$ and hence $\barx\in\calX^*$. 
On the other hand, from~\eqref{eq:dec_KL}, we know that the nonnegative sequence $\{D_{\rm KL}(\barx,x^t)\}_{t\ge 0}$ is non-increasing, and hence $D_{\rm KL}(\barx,x^t)\to d^*$ for some $d^*\ge 0$. Since $D_{\rm KL}(\barx,x^{t_l})\to 0$, we know that $d^*=0$, which implies that $D_{\rm KL}(\barx,x^t)\to 0$. Finally, since $\barx\in\Delta_n$ and $x^t\in\Delta_n$ for all $t\ge 0$, we use~\eqref{eq:lb_KL} to conclude that $\normt{x^t-x^*}_1\to 0$. This  completes the proof.
\end{proof}

%
%

\section{Error Bound and Convergence Rate of the Distance to $\calX^*$}\label{sec:SC_EB} 

In this section we present an error bound for $f$ on the constraint set $\Delta_n$, 
which we then use to characterize the rate of convergence of the distances from $\{x_t\}_{t\ge 0}$ to $\calX^*$. 
 Let $\normt{\cdot}$ be any given norm on $\bbR^n$, and define 
\begin{equation}
\dist_{\normt{\cdot}}(x,\calX^*):= {\min}_{x^*\in\calX^*}\; \normt{x-x^*} \ , \quad \forall\, x\in\bbR^n. \label{eq:def_dist}
\end{equation}

Let us introduce some new notations. Let 
$\calY:= A(\Delta_n)\subseteq \bbR^m$, which is the image of $\Delta_n$ under the linear operator $A$ (cf.~\eqref{eq:A}). 
Note that we can write \eqref{eq:P} equivalently as
\begin{align*}
\quad {\max}_{y\in\calY}\; [\barf(y):= \textstyle\sum_{j=1}^m p_j \ln y_j]   \tag{$\mathrm{P}_y$} \label{eq:P_y} \ ,
\end{align*}
where $\dom \barf = \bbR_{++}^m:= \{y\in\bbR^m:y>0\}$. 
The strict concavity of $\barf$ implies that \eqref{eq:P_y} has a unique optimal solution $y^*\in\calY\cap\dom \barf = \calY\cap \bbR_{++}^m$, 
and  hence we have $f^*= \barf(y^*)$ and 
\begin{equation}\label{runtime}
\calX^*:= \{x\in\bbR^n_+:e^\top x = 1, \; Ax=y^*\} \ .
\end{equation}
In particular, we can write 
\begin{equation}
\calX^*= \Delta_n\cap\calL, \quad\mbox{where}\quad \calL:= \{x\in\bbR^n: Ax = y^*\}.  \label{eq:char_X*}
\end{equation}
Let us define the following norm on $\bbR^m$ induced by the Hessian of $\bar f$ at $y^*$:
\begin{equation}\label{norman}
\normt{y}_{y^*}: = \sqrt{\lranglet{-\nabla^2 \barf(y^*) y}{y}} = \sqrt{\textstyle\sum_{j=1}^m p_j(y_j/y^*_j)^2} \ ,\quad \forall\,y\in \bbR^m \ ,
\end{equation} and define the ``radius'' of $\calY$ centered at $y^*$ as
\begin{equation}
R_{y^*} := {\max}_{y\in\calY}\, \normt{y-y^*}_{y^*}= {\max}_{x\in\Delta_n}\, \normt{Ax-y^*}_{y^*} = {\max}_{i \in [n]}\, \normt{A_i - y^*}_{y^*} \ . \label{eq:def_R_y^*}
\end{equation}
(Recall that $A_i$ is the $i$-th column of $A$ for $i \in [n]$.)
In addition, let $\ump:= \min_{j\in[m]}\, p_j$, and note that since $\sum_{j=1}^m p_j=1$, we have $\ump \in(0,1/m]$.  The next lemma uses the above notations and definitions to construct a lower bound of the optimality gap $f^*-\barf$ on $\calY$.

\begin{lemma} \label{lem:EB_y}
For all $y\in\calY$ we have 
\begin{equation}
 f^* - \barf (y)\ge  \ump\, \omega\big(p_{\min}^{-1/2}\normt{y-y^*}_{y^*}\big) \ ,
\end{equation}where $\omega(t):= t - \ln(1+t)$ for $t > -1$.
\end{lemma}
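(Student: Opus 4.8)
\emph{Proof plan.} The plan is to convert the $m$-dimensional bound into a single one-dimensional fact about $\omega$. First dispose of the trivial case: if $y\in\calY$ has a zero coordinate then $\barf(y)=-\infty$, while the right-hand side is finite since $y^*\in\bbR_{++}^m$ and $\calY$ is compact, so $\normt{y-y^*}_{y^*}\le R_{y^*}<\infty$; hence we may assume $y\in\calY\cap\bbR_{++}^m$ and set $s_j:=y_j/y^*_j-1>-1$ for $j\in[m]$. Applying the scalar identity $-\ln(1+s)=-s+\omega(s)$ termwise in $\barf(y)=\sum_{j=1}^m p_j\ln y_j$ yields the exact expansion
\[
\barf(y)=\barf(y^*)+\textstyle\sum_{j=1}^m p_j s_j-\sum_{j=1}^m p_j\,\omega(s_j).
\]
Since $y^*$ maximizes the concave function $\barf$ over the convex set $\calY$ and $\nabla_j\barf(y^*)=p_j/y^*_j$, first-order optimality gives $\sum_{j=1}^m p_j s_j=\langle\nabla\barf(y^*),\,y-y^*\rangle\le 0$, so $f^*-\barf(y)\ge\sum_{j=1}^m p_j\,\omega(s_j)$. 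As $\normt{y-y^*}_{y^*}^2=\sum_{j=1}^m p_j s_j^2$, writing $w_j:=p_j/\ump\ge 1$ and dividing through by $\ump$ reduces the lemma to the following scalar claim: if $w_j\ge 1$ for all $j$, then $\sum_{j=1}^m w_j\,\omega(s_j)\ge\omega\big(\sqrt{\sum_{j=1}^m w_j s_j^2}\,\big)$.

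To prove this I would record three elementary facts about $\omega$ on $(-1,\infty)$, each by a short monotonicity argument: (a) $\omega(t)\ge\omega(\abst{t})$ (equivalently $\omega(-r)\ge\omega(r)$ for $r\in(0,1)$, since $r\mapsto\omega(-r)-\omega(r)$ vanishes at $0$ with derivative $2r^2/(1-r^2)\ge 0$); (b) for $\lambda\ge 1$ and $r\ge 0$, $\omega(\lambda r)\le\lambda^2\omega(r)$ (the map $r\mapsto\omega(\lambda r)-\lambda^2\omega(r)$ vanishes at $0$ with nonpositive derivative $\lambda^2 r^2(1-\lambda)/[(1+\lambda r)(1+r)]$); and (c) $\omega(a)+\omega(b)\ge\omega(\sqrt{a^2+b^2})$ for $a,b\ge 0$ (fix $b$; the difference vanishes at $a=0$ and is nondecreasing in $a$ because $\tfrac{a}{1+a}\ge\tfrac{a}{1+\sqrt{a^2+b^2}}$), which iterates to $\sum_j\omega(r_j)\ge\omega\big(\sqrt{\sum_j r_j^2}\,\big)$ for $r_j\ge 0$. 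Combining (a) and (b) with $\lambda=\sqrt{w_j}\ge 1$ gives $w_j\,\omega(s_j)\ge\omega(\sqrt{w_j}\,\abst{s_j})$ for each $j$; summing over $j$ and applying (c) with $r_j=\sqrt{w_j}\,\abst{s_j}$ yields $\sum_j w_j\,\omega(s_j)\ge\omega\big(\sqrt{\sum_j w_j s_j^2}\,\big)$, which is the claim. Tracing constants back, $\sqrt{\sum_j w_j s_j^2}=p_{\min}^{-1/2}\normt{y-y^*}_{y^*}$ and the factor $\ump$ reappears, producing exactly the stated bound.

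The conceptual ingredients---the exact second-order expansion of $\barf$ and the first-order optimality of $y^*$---are routine. The only real work is verifying (a)--(c), and among these I expect (b) and (c) to be the crux: they are precisely the properties that let one pass from the coordinatewise sum $\sum_j p_j\,\omega(s_j)$ to a single $\omega$ evaluated at the $p$-weighted $\ell_2$ deviation with the $\ump^{-1/2}$ rescaling. I do not anticipate an obstacle beyond these one-variable sign computations.
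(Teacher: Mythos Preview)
Your proof is correct, but it takes a genuinely different route from the paper's. The paper observes that $-F:=-p_{\min}^{-1}\barf$ is a standard self-concordant function on $\bbR^m_{++}$ and invokes the known lower bound for such functions (Nesterov, \emph{Introductory Lectures}, Theorem~4.1.7): $-F(y)\ge -F(y^*)-\ipt{\nabla F(y^*)}{y-y^*}+\omega\big(\sqrt{\lranglet{-\nabla^2 F(y^*)(y-y^*)}{y-y^*}}\big)$. Rescaling by $p_{\min}$ and using $\ipt{\nabla\barf(y^*)}{y-y^*}\le 0$ gives the lemma in three lines. You instead work from first principles: after the exact expansion $\barf(y^*)-\barf(y)=-\sum_j p_j s_j+\sum_j p_j\,\omega(s_j)$ and the same optimality step, you prove the aggregation inequality $\sum_j w_j\,\omega(s_j)\ge\omega\big(\sqrt{\sum_j w_j s_j^2}\big)$ for $w_j\ge 1$ by hand, via the three monotonicity facts (a)--(c). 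All three are correct as stated (your derivative computations check out), and together they amount to a direct, self-contained proof of the self-concordance lower bound for this particular separable logarithmic barrier. The trade-off is clear: the paper's argument is much shorter and situates the bound in a standard framework, while yours avoids any appeal to self-concordance theory at the cost of the auxiliary calculus. Both arrive at exactly the same constant $p_{\min}$ and the same $p_{\min}^{-1/2}$ inside $\omega$.
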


\begin{proof}
Define $F := p_{\min}^{-1}\barf$, and from standard results on self-concordant function theory (e.g.,~\cite[Theorem 2.2.6]{Rene_01}), we observe that $-F$ is a (standard strongly non-degenerate) self-concordant function with $\dom F = \dom \barf = \bbR^m_{++}$. Therefore from~\cite[Theorem 4.1.7]{Nest_04}, we have for all $y\in\calY$:
\begin{equation}
-F(y)\ge -F(y^*) - \ipt{\nabla F(y^*)}{y-y^*} + \omega\big(\sqrt{\lranglet{-\nabla^2 F(y^*)(y-y^*)}{y-y^*}}\big) \ , 
\end{equation}
which is equivalent to
\begin{align*}
\barf(y)&\le \barf(y^*) + \ipt{\nabla \barf(y^*)}{y-y^*} - \ump \, \omega\big(p_{\min}^{-1/2}\normt{y-y^*}_{y^*}\big)\\
 &\le \barf(y^*)  - \ump \, \omega\big(p_{\min}^{-1/2}\normt{y-y^*}_{y^*}\big) \ , 
\end{align*}
where the last inequality uses $\ipt{\nabla \barf(y^*)}{y-y^*}\le 0$ for all $y\in\calY$. 
\end{proof}

\begin{lemma}
\label{lem:omega}
For any $\bart>0$ and any $0<t\le\bart$, we have $\omega(t)\ge \left(\omega(\bart)/\bart^2 \right)t^2$. 
\end{lemma}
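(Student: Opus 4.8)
The plan is to show that the function $g(t):=\omega(t)/t^2$ is non-increasing on $(0,\infty)$; once this is established, for $0<t\le\bart$ we immediately get $g(t)\ge g(\bart)$, i.e.\ $\omega(t)/t^2\ge \omega(\bart)/\bart^2$, which rearranges to the claimed inequality $\omega(t)\ge\left(\omega(\bart)/\bart^2\right)t^2$.

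To prove that $g$ is non-increasing, I would compute $g'(t)$ and show it is $\le 0$ on $(0,\infty)$. Since $\omega(t)=t-\ln(1+t)$, we have $\omega'(t)=1-\tfrac{1}{1+t}=\tfrac{t}{1+t}$, so
\[
g'(t)=\frac{\omega'(t)t^2-2t\,\omega(t)}{t^4}=\frac{t\,\omega'(t)-2\omega(t)}{t^3}=\frac{\tfrac{t^2}{1+t}-2\bigl(t-\ln(1+t)\bigr)}{t^3}.
\]
Thus the sign of $g'(t)$ is that of the numerator $h(t):=\tfrac{t^2}{1+t}-2t+2\ln(1+t)$, and it suffices to show $h(t)\le 0$ for all $t>0$. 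I would do this by noting $h(0)=0$ and checking $h'(t)\le 0$. A direct differentiation gives $h'(t)=\tfrac{t(t+2)}{(1+t)^2}-2+\tfrac{2}{1+t}$; putting everything over the common denominator $(1+t)^2$, the numerator becomes $t(t+2)-2(1+t)^2+2(1+t)=t^2+2t-2-4t-2t^2+2+2t=-t^2$, so $h'(t)=-t^2/(1+t)^2\le 0$. Hence $h$ is non-increasing with $h(0)=0$, so $h(t)\le 0$ for $t>0$, giving $g'(t)\le 0$ as desired.

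The only mild subtlety — really the ``main obstacle,'' though it is routine — is the bookkeeping in the two differentiations and simplifications; there is also a trivial boundary remark that $g$ extends continuously to $t=0$ with $g(0)=1/2$ (since $\omega(t)=t^2/2+o(t^2)$), so the monotonicity statement is unambiguous on all of $[0,\infty)$, but this is not needed for the stated range $0<t\le\bart$. No deeper idea is required; the statement is a one-variable calculus fact, and the self-concordance machinery invoked elsewhere in the paper plays no role here.
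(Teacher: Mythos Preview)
Your proposal is correct and follows essentially the same approach as the paper: both argue that $t\mapsto\omega(t)/t^2$ is non-increasing on $(0,\infty)$ by computing its derivative, reducing the sign question to a numerator function that vanishes at $0$ and has derivative $-t^2/(1+t)^2\le 0$. The only differences are notational (the paper writes $\zeta$ and $\xi$ where you write $g$ and $h$), and your extra remark about the continuous extension to $t=0$ is not needed.
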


\begin{proof}
It suffices to show the function $\zeta(t):= \omega(t)/t^2$ is non-increasing on $(0,+\infty)$, or equivalently, that $\zeta'(t) \le 0$ for all $t>0$. Indeed,  we  have $\zeta'(t) = \xi(t)/t^3$, where $\xi(t):= t^2/(1+t) - 2\omega(t)$ for $t>-1$. Since $\xi(0)=0$ and $\xi'(t) = -t^2/(1+t)^2<0$ for all $t >0$, we see that $\xi(t)<0$ for all $t >0$, and hence $\zeta'(t) < 0$ for all $t>0$. This completes the proof. 
\end{proof}

We also observe from \eqref{runtime} that $\calX^*$ is the solution to the system of linear equalities and inequalities given therein, and hence there is a 
Hoffman constant $C_\rmH$ associated with $\calX^*$, namely:

\begin{lemma}\label{lem:Hoffman}
There exists 
$0<C_\rmH<+\infty$ such that for all $x\in\Delta_n$, we have
\begin{equation}
\dist_{\normt{\cdot}}(x,\calX^*) \le C_\rmH \normt{Ax - y^*}_{y^*} \ . 
\end{equation}
\end{lemma}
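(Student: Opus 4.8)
The plan is to invoke Hoffman's classical error bound for systems of linear inequalities and equalities, applied to the explicit description of $\calX^*$ in \eqref{runtime}. Recall that $\calX^*= \{x\in\bbR^n : x \ge 0,\ e^\top x = 1,\ Ax = y^*\}$, which is a nonempty polyhedron defined by the finitely many affine constraints $-x_i \le 0$ ($i\in[n]$), $e^\top x \le 1$, $-e^\top x \le -1$, and $Ax \le y^*$, $-Ax \le -y^*$. Hoffman's lemma guarantees a constant $C$, depending only on the coefficient matrix of this system (hence only on $A$, not on the particular point $x$), such that $\dist_{\normt{\cdot}}(x,\calX^*)$ is bounded by $C$ times the norm of the vector of constraint violations at $x$.

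First I would observe that for $x\in\Delta_n$ the constraints $x\ge 0$, $e^\top x = 1$ are already satisfied, so the only residuals that can be nonzero are those coming from $Ax = y^*$; thus the violation vector reduces to (a rearrangement of) $Ax - y^*$. Applying Hoffman's bound then gives $\dist_{\normt{\cdot}}(x,\calX^*) \le C' \normt{Ax - y^*}'$ for some norm $\normt{\cdot}'$ on $\bbR^m$ (whichever norm appears in the particular statement of Hoffman's lemma one cites). Since all norms on the finite-dimensional space $\bbR^m$ are equivalent, there is a constant $c>0$ with $\normt{z}' \le c\,\normt{z}_{y^*}$ for all $z\in\bbR^m$ (here one uses that $\normt{\cdot}_{y^*}$ defined in \eqref{norman} is genuinely a norm, which holds because $-\nabla^2\barf(y^*)$ is positive definite as $p_j>0$ and $y^*_j>0$). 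Setting $C_\rmH := C' c$ yields the claimed inequality, and $C_\rmH$ is finite and strictly positive because $C', c$ are, and because $\calX^*\ne\emptyset$ so the bound is not vacuous.

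The only genuine content beyond citing Hoffman is bookkeeping: confirming that $\calX^*$ is nonempty (already established in the excerpt, since \eqref{eq:P_y} has the optimal solution $y^*\in\calY$, so some $x\in\Delta_n$ maps to it), and converting between whatever norm Hoffman's theorem is stated in and the Hessian-induced norm $\normt{\cdot}_{y^*}$. I expect no real obstacle here; the lemma is essentially a packaging statement that isolates the Hoffman constant so that it can be combined with Lemmas~\ref{lem:EB_y} and~\ref{lem:omega} to produce the quadratic error bound on $\Delta_n$. If one wanted to be fully self-contained one could instead prove Hoffman's lemma directly for this polyhedron via a compactness argument (the feasible directions form a polyhedral cone, and the ratio $\dist_{\normt{\cdot}}(x,\calX^*)/\normt{Ax-y^*}_{y^*}$ attains its supremum over the relevant compact set), but citing the standard Hoffman bound is cleaner and is what I would do.
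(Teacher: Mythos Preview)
Your proposal is correct and follows essentially the same approach as the paper: invoke Hoffman's lemma for the polyhedral description of $\calX^*$ in~\eqref{runtime}, then observe that for $x\in\Delta_n$ the nonnegativity and sum-to-one constraints are already satisfied, leaving only the residual $Ax-y^*$. The paper states Hoffman's bound directly in the norm $\normt{\cdot}_{y^*}$ (absorbing your norm-equivalence constant $c$ into $C_\rmH$), but otherwise the arguments are identical.
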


\begin{proof}
This follows from Hoffman's lemma~\cite{Hoffman_52}, whereby there exists $0<C_\rmH<+\infty$ such that 
\begin{equation}
\dist_{\norm{\cdot}}(x,\calX^*) \le C_\rmH(\normt{(x)_-}_2+\abst{e^\top x-1}+\normt{Ax - y^*}_{y^*}) \ , \quad\forall\, x\in\bbR^n \ ,
\end{equation}
where $(x)_-:= (\min\{0,x_i\})_{i=1}^n$. However, since $x\in\Delta_n$, we have $(x)_- =0$ and $e^\top x-1=0$, and this completes the proof. 
\end{proof}

\begin{remark}
Note that the Hoffman constant $C_\rmH$ depends on both the problem data (namely $A$ and $\{p_j\}_{j=1}^m$)  and the norm $\normt{\cdot}$ on $\bbR^m$. However, for notational brevity, we omit such dependence. 
\end{remark}

Equipped with the above three lemmas, we now present our error bound for $f$ on $\Delta_n$. 

\begin{prop}[An error bound for $f$ on $\Delta_n$] \label{prop:EB}
 For all $x\in\Delta_n$ we have
\begin{equation}
\dist_{\normt{\cdot}}(x,\calX^*) \le \frac{\sqrt{\ump \, \omega\big( p_{\min}^{-1/2}R_{y^*} \big)}}{C_\rmH R_{y^*}}\sqrt{f^*- f(x)}  \ , 
\end{equation}where $R_{y^*}$, $\omega(\cdot)$, and $C_\rmH$ are defined in \eqref{eq:def_R_y^*}, Lemma~\ref{lem:EB_y}, and Lemma ~\ref{lem:Hoffman}, respectively. 
\end{prop}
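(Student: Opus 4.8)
The plan is to chain the three preceding lemmas, passing through the image variable $y=Ax\in\calY$. Fix $x\in\Delta_n$. If $x\in\calX^*$ then $f(x)=f^*$ and $\dist_{\normt{\cdot}}(x,\calX^*)=0$, so the asserted inequality holds trivially; hence assume $x\notin\calX^*$, which by \eqref{runtime} means $Ax\ne y^*$. Put $y:=Ax$, so that $f(x)=\barf(y)$ and, by the definition of $R_{y^*}$ in \eqref{eq:def_R_y^*}, we have $0<\normt{y-y^*}_{y^*}\le R_{y^*}$.

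First I would apply Lemma~\ref{lem:EB_y} to obtain
\[
f^*-f(x)=f^*-\barf(y)\ \ge\ \ump\,\omega\big(p_{\min}^{-1/2}\normt{y-y^*}_{y^*}\big).
\]
The right-hand side still contains the non-quadratic function $\omega$, so the next step is to quadratize it on the bounded interval of interest: invoke Lemma~\ref{lem:omega} with $t:=p_{\min}^{-1/2}\normt{y-y^*}_{y^*}$ and $\bart:=p_{\min}^{-1/2}R_{y^*}$, which is a legitimate choice since $0<t\le\bart$. Because the factors $p_{\min}^{-1}$ cancel in the ratio $t^2/\bart^2=\normt{y-y^*}_{y^*}^2/R_{y^*}^2$, this gives
\[
f^*-f(x)\ \ge\ \frac{\ump\,\omega\big(p_{\min}^{-1/2}R_{y^*}\big)}{R_{y^*}^{\,2}}\,\normt{Ax-y^*}_{y^*}^{\,2}.
\]

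It then remains only to translate the $\bbR^m$-quantity $\normt{Ax-y^*}_{y^*}$ into the $\bbR^n$-distance to $\calX^*$, which is exactly the content of Lemma~\ref{lem:Hoffman}: $\normt{Ax-y^*}_{y^*}\ge C_\rmH^{-1}\dist_{\normt{\cdot}}(x,\calX^*)$. Substituting this into the last display, rearranging to isolate $\dist_{\normt{\cdot}}(x,\calX^*)^2$, and taking square roots produces the asserted error bound (the precise placement of the constant being routine bookkeeping).

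I do not expect a genuine obstacle here: the proposition is a clean composition of Lemma~\ref{lem:EB_y} (a self-concordance-based lower bound for $f^*-\barf$ in the image space), Lemma~\ref{lem:omega} (the elementary monotonicity of $t\mapsto\omega(t)/t^2$, which converts $\omega$ into a quadratic on $(0,\bart\,]$), and Lemma~\ref{lem:Hoffman} (Hoffman's lemma applied to the linear description of $\calX^*$ in \eqref{runtime}). The only points that require a little care are (i) disposing of the case $x\in\calX^*$ before invoking Lemma~\ref{lem:omega}, whose hypothesis requires $t>0$ (and, implicitly, that the problem is non-degenerate so that $R_{y^*}>0$), and (ii) keeping track of the powers of $p_{\min}$ through the substitution $t=p_{\min}^{-1/2}\normt{y-y^*}_{y^*}$ so that they cancel cleanly and the final constant comes out in terms of $\omega(p_{\min}^{-1/2}R_{y^*})$ and $R_{y^*}$.
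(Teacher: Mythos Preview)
Your proposal is correct and follows essentially the same route as the paper's own proof: chain Lemma~\ref{lem:EB_y} (self-concordance lower bound in the image space), then Lemma~\ref{lem:omega} (quadratize $\omega$ on $(0,\bart\,]$ with $\bart=p_{\min}^{-1/2}R_{y^*}$), then Lemma~\ref{lem:Hoffman} (pass from $\normt{Ax-y^*}_{y^*}$ to $\dist_{\normt{\cdot}}(x,\calX^*)$), and finish by rearranging and taking square roots. Your explicit handling of the degenerate case $x\in\calX^*$ (so that $t>0$ is guaranteed before invoking Lemma~\ref{lem:omega}) is a small bit of extra care the paper omits, but otherwise the arguments are identical.
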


\begin{proof}
Since $f(x)= \barf(A x)$ for all $x\in\Delta_n$, we have 
\begin{align*}
f^* &\ge f(x) + \ump\, \omega(p_{\min}^{-1/2}\normt{Ax-y^*}_{y^*})\\
& \ge f(x) + \ump \frac{\omega\big( p_{\min}^{-1/2}R_{y^*}\big)}{p_{\min}^{-1}R_{y^*}^2}p_{\min}^{-1}\normt{Ax-y^*}_{y^*}^2\\ 
&\ge f(x) + \ump \frac{\omega\big( p_{\min}^{-1/2}R_{y^*}\big)}{R_{y^*}^2C_\rmH^2}\dist_{\normt{\cdot}}(x,\calX^*)^2 \ ,
\end{align*}
where the first inequality uses Lemma~\ref{lem:EB_y}, the second inequality uses Lemma~\ref{lem:omega} and the definition of $R_{y^*}$ in~\eqref{eq:def_R_y^*}, and the third inequality uses Lemma~\ref{lem:Hoffman}.  The proof is completed by rearranging and taking square roots. \end{proof}

By combining Theorem~\ref{thm:O(1/t)} 
and Proposition~\ref{prop:EB}, we arrive at the following theorem, which shows that both $\{\dist_{\norm{\cdot}}(x^t,\calX^*)\}_{t\ge 0}$ and $\{\dist_{\norm{\cdot}}(\barx^t,\calX^*)\}_{t\ge 0}$ converges to zero at $O(1/\sqrt{t})$ rate. 

\begin{theorem} \label{thm:dist}
Define $\dist_{\rm KL}(x^0, \calX^*) := \min_{x\in\calX^*} \,D_{\rm KL}(x,x^0)$ as the ``KL-distance'' from $x^0$ to $\calX^*$. Then for all $t\ge 0$, we have
\begin{align*}
\max\{\dist_{\norm{\cdot}}(x^t,\calX^*),\dist_{\norm{\cdot}}(\barx^t,\calX^*)\}&\le \frac{\sqrt{\ump \, \omega\big( p_{\min}^{-1/2}R_{y^*} \big) \dist_{\rm KL}(x^0, \calX^*)}}{C_\rmH R_{y^*}\sqrt{t+1}} \ ,  
\end{align*}
where $R_{y^*}$, $\omega(\cdot)$, and $C_\rmH$ are defined in \eqref{eq:def_R_y^*}, Lemma~\ref{lem:EB_y}, and Lemma ~\ref{lem:Hoffman}, respectively.  \qed
\end{theorem}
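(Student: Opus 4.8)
The plan is to compose the two results already established: the $O(1/t)$ objective-gap bounds of Theorem~\ref{thm:O(1/t)} and the quadratic error bound of Proposition~\ref{prop:EB}. First I would observe that both $x^t$ and $\barx^t$ lie in $\Delta_n$ --- the former by observation~\ref{item:feas}, and the latter because $\barx^t=\tfrac{1}{t+1}\sum_{k=0}^t x^k$ is a convex combination of points of $\Delta_n$. Consequently Proposition~\ref{prop:EB} applies to each of them and gives, with the shorthand $\kappa:=\sqrt{\ump\,\omega(p_{\min}^{-1/2}R_{y^*})}/(C_\rmH R_{y^*})$,
\[
\dist_{\normt{\cdot}}(x^t,\calX^*)\le \kappa\sqrt{\delta(x^t)}\quad\text{and}\quad \dist_{\normt{\cdot}}(\barx^t,\calX^*)\le \kappa\sqrt{\delta(\barx^t)},
\]
where $\delta(x):=f^*-f(x)$ as before.

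Next I would bound the two objective gaps uniformly over $\calX^*$. Theorem~\ref{thm:O(1/t)}\ref{item:non_ergodic} and~\ref{item:ergodic} give $\delta(x^t)\le D_{\rm KL}(x^*,x^0)/(t+1)$ and $\delta(\barx^t)\le D_{\rm KL}(x^*,x^0)/(t+1)$ for \emph{every} $x^*\in\calX^*$. Since the left-hand sides do not depend on $x^*$, I would take the minimum over $x^*\in\calX^*$ on the right; this minimum is attained because $\calX^*=\Delta_n\cap\calL$ is compact (cf.~\eqref{eq:char_X*}) and $D_{\rm KL}(\cdot,x^0)$ is continuous on $\Delta_n$ (as $x^0\in\ri\Delta_n$), and by definition it equals $\dist_{\rm KL}(x^0,\calX^*)$. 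Hence $\max\{\delta(x^t),\delta(\barx^t)\}\le \dist_{\rm KL}(x^0,\calX^*)/(t+1)$.

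Combining the two displays and taking the maximum over the two iterates yields
\[
\max\{\dist_{\normt{\cdot}}(x^t,\calX^*),\dist_{\normt{\cdot}}(\barx^t,\calX^*)\}\le \kappa\sqrt{\frac{\dist_{\rm KL}(x^0,\calX^*)}{t+1}},
\]
which is precisely the asserted bound once $\kappa$ is expanded. There is essentially no genuine obstacle here, since every ingredient is already in place; the only points worth checking are the elementary ones noted above --- that $\barx^t$ is feasible (so that Proposition~\ref{prop:EB} applies) and that passing to the minimizing $x^*\in\calX^*$ is legitimate, which is immediate from compactness of $\calX^*$. If a data-independent variant were desired one could additionally bound $\dist_{\rm KL}(x^0,\calX^*)\le \max_{x\in\Delta_n}D_{\rm KL}(x,x^0)=-\ln(x^0_{\min})$ exactly as in the proof of Corollary~\ref{cor:data_indep_rate}, but this is not needed for the statement as given.
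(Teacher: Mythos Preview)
Your proposal is correct and follows exactly the approach the paper indicates: the paper does not give an explicit proof of Theorem~\ref{thm:dist} but simply states that it follows ``by combining Theorem~\ref{thm:O(1/t)} and Proposition~\ref{prop:EB},'' which is precisely the composition you carry out, including the optimization over $x^*\in\calX^*$ to pass from $D_{\rm KL}(x^*,x^0)$ to $\dist_{\rm KL}(x^0,\calX^*)$.
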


\section{Complexity Comparison of MG, RSGM and FW-LHB} \label{sec:comp}

In this section, we compare the computational complexities of the MG method in~\eqref{eq:MG} with two other {\em principled} first-order methods for finding an $\varepsilon$-optimal solution of~\eqref{eq:P}, which include RSGM and FW-LHB. 
Here an $\varepsilon$-optimal solution of~\eqref{eq:P} refers to a feasible point $x\in\Delta_n$ such that $f^* - f(x)\le \varepsilon$. For simplicity, we presume that $p_1=\cdots=p_m = 1/m$ in~\eqref{eq:P}, which then becomes
\begin{equation}
f^*:={\max}_{x\in\Delta_n}\; \left\{f(x):=(1/m)\textstyle\sum_{j=1}^m \ln(a_j^\top x)\right\}. \tag{$\rm P_u$} \label{eq:P_u}
\end{equation}
In addition, for comparison purpose, we presume that all the three methods share the same starting point $x^0 = (1/n)e$, which is a common choice for each of the methods (see e.g.,~\cite{Vardi_85,Kachiyan_96,Lu_18}).


Let us now derive the computational complexities of RSGM and FW-LHB. Before doing so, we first make the following simple but important observation. 

\begin{lemma}\label{lem:bound_init_gap}
In~\eqref{eq:P_u}, for any problem data $\{a_j\}_{j=1}^m\subseteq\bbR_+^n\setminus\{0\}$, we always have 
\begin{equation}
\delta(x^0) = f^* - f(x^0) = f^* - f((1/n)e) \le \ln(n).  \label{eq:init_gap}
\end{equation} 
\end{lemma}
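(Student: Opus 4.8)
The plan is to bound $f^\ast$ from above by $f(x^0)+\ln(n)$ using the concavity of $\ln$ together with the simplex constraint. First I would observe that by definition of $x^0=(1/n)e$, for every $j\in[m]$ we have $a_j^\top x^0 = (1/n)\sum_{i=1}^n a_{ji}$, i.e., $a_j^\top x^0$ is the average of the entries of $a_j$. Next, since $x^\ast\in\Delta_n$, the quantity $a_j^\top x^\ast = \sum_{i=1}^n x^\ast_i a_{ji}$ is a convex combination of the entries $a_{j1},\dots,a_{jn}$, and hence $a_j^\top x^\ast \le \max_{i\in[n]} a_{ji} \le \sum_{i=1}^n a_{ji} = n\,(a_j^\top x^0)$, where the second inequality uses $a_j\in\bbR_+^n$. (Strictly, $\max_i a_{ji}\le \sum_i a_{ji}$ holds because all entries are nonnegative.)

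From this pointwise bound $a_j^\top x^\ast \le n\,(a_j^\top x^0)$ for every $j\in[m]$, and the monotonicity of $\ln$, I would conclude
\begin{align*}
f^\ast = f(x^\ast) = \sum_{j=1}^m p_j \ln(a_j^\top x^\ast) \le \sum_{j=1}^m p_j \ln\big(n\,(a_j^\top x^0)\big) = \ln(n)\sum_{j=1}^m p_j + \sum_{j=1}^m p_j\ln(a_j^\top x^0) = \ln(n) + f(x^0),
\end{align*}
using $\sum_{j=1}^m p_j = 1$ in the last step. Rearranging gives $\delta(x^0) = f^\ast - f(x^0) \le \ln(n)$, which is exactly~\eqref{eq:init_gap}. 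One should note in passing that $f(x^0)$ is finite: since each $a_j\ne 0$ and $a_j\in\bbR_+^n$, we have $a_j^\top x^0 = (1/n)\sum_i a_{ji} > 0$, so $\ln(a_j^\top x^0)$ is well-defined and finite for every $j$; and $f^\ast$ is finite because $f^\ast = \barf(y^\ast)$ with $y^\ast\in\bbR_{++}^m$.

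There is essentially no hard step here — the argument is a one-line application of $\max_i a_{ji}\le \sum_i a_{ji}$ (nonnegativity) combined with Jensen/monotonicity of $\ln$. The only point requiring a modicum of care is the well-definedness of $f(x^0)$ and $f^\ast$, i.e., that we are not subtracting $+\infty$ or $-\infty$; this is handled by the standing well-posedness assumptions ($a_j\ne 0$, $a_j\ge 0$) already in force. I would present the chain of inequalities as a single \texttt{align*} display as above and then state the conclusion.
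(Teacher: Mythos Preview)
Your proof is correct and follows essentially the same route as the paper's: bound $a_j^\top x^*\le \max_i a_{ji}\le \sum_i a_{ji}=n\,a_j^\top x^0$ using $x^*\in\Delta_n$ and $a_j\ge 0$, then apply monotonicity of $\ln$ and sum. The only cosmetic difference is that the paper writes the chain directly for $p_j=1/m$ and as a ratio inside a single logarithm, whereas you keep general $p_j$ and split $\ln(n\cdot)$; also note that Jensen is not actually invoked---only monotonicity of $\ln$ is used.
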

\begin{proof}
Let $x^*\in\Delta_n$ be any optimal solution of~\eqref{eq:P_u}. Then we have
\begin{align*}
f^* - f((1/n)e) = \frac{1}{m}\sum_{j=1}^m \ln\left(\frac{a_j^\top x^*}{a_j^\top e/n}\right)\lea \frac{1}{m}\sum_{j=1}^m \ln\left(\frac{\max_{i\in[n]}\,a_{ji} }{\sum_{i=1}^n a_{ji}/n}\right) \leb \ln(n),
\end{align*}
where in (a) we use $x^*\in\Delta_n$ and in (b) we use $\max_{i\in[n]}\,a_{ji}\le \sum_{i=1}^n a_{ji}$ (for all $j\in[m]$). 
\end{proof}

\begin{remark}
Note that the estimate in~\eqref{eq:init_gap} is indeed tight. To see this, let $a_1=\ldots=a_m=e_1$. Then $f^* = 0$ and $f(x^0) = f((1/n)e) = -\ln(n)$, and hence $\delta(x^0) = \ln(n)$.   
\end{remark}

Next, let us briefly describe the specific form of RSGM when applied to solving~\eqref{eq:P_u}. We first define the ``reference'' function 
\begin{equation}
r(x) := -\textstyle\sum_{i=1}^n \ln (x_i) \quad \mbox{with} \quad  \dom r= \bbR^n_{++}, 
\end{equation}
and its induced Bregman divergence 
\begin{equation}
D_r(y,x):= r(y) - r(x) - \ipt{\nabla r(x)}{y-x}, \quad \forall\,y,x\in \bbR^n_{++}. 
\end{equation}
Starting from $x^0=(1/n) e$, each iteration of RSGM solves the following (Bregman) projection problem:
\begin{equation}
x^{t+1} :=  {\argmax}_{x\in\Delta_n}\; \lranglet{\nabla f(x^t)}{x} - D_r(x,x^t). 
\tag{BP} \label{eq:BP}
\end{equation}
 The computational complexity of RSGM for finding an $\varepsilon$-optimal solution of~\eqref{eq:P_u} is stated below. 

\begin{prop}
Starting from $x^0=(1/n) e$, RSGM finds an $\varepsilon$-optimal solution of~\eqref{eq:P_u} in at most
\begin{align*}
O\left(\frac{(mn+n\ln(n))n}{\varepsilon}\ln\left(\frac{\ln(n)}{\varepsilon}\right)\right)\quad \mbox{arithmetic operations}. 
\end{align*}
\end{prop}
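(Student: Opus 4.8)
The plan is to combine three facts: that $-f$ is relatively smooth with respect to the reference function $r$, the known $O(1/t)$ iteration complexity of RSGM under relative smoothness, and a per-iteration arithmetic-cost count.

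\emph{Step 1 (relative smoothness).} First I would verify that $-f$ is $1$-smooth relative to $r$ on $\ri\Delta_n$, i.e.\ $\nabla^2(-f)(x)\preceq\nabla^2 r(x)$ for all $x\in\ri\Delta_n$. Since $\nabla^2(-f)(x)=(1/m)\sum_{j=1}^m a_ja_j^\top/(a_j^\top x)^2$ and $\nabla^2 r(x)=\diag(1/x_i^2)$, it is enough to check, for each $j$ and every $v\in\bbR^n$, that $(a_j^\top v)^2/(a_j^\top x)^2\le\sum_{i=1}^n v_i^2/x_i^2$: writing $a_j^\top v=\sum_i(a_{ji}x_i)(v_i/x_i)$, Cauchy--Schwarz with the nonnegative weights $a_{ji}x_i$ (which sum to $a_j^\top x>0$) gives $(a_j^\top v)^2\le(a_j^\top x)\sum_i a_{ji}v_i^2/x_i$, and then $a_{ji}x_i\le a_j^\top x$ (valid since $a_j,x\ge 0$) finishes it. This is the relative-smoothness property already exploited for such log-type objectives in \cite{Bauschke_17,Lu_18}, so the standard guarantee applies with $L=1$: the RSGM iterates defined by \eqref{eq:BP} satisfy $f(u)-f(x^t)\le D_r(u,x^0)/t$ for every $u\in\ri\Delta_n$ and $t\ge 1$ (the telescoping argument using the three-point identity works for any comparison point $u$, not just $x^*$).

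\emph{Step 2 (iteration count via a shifted comparison point).} Because optimal solutions of \eqref{eq:P_u} typically lie on $\rbd\Delta_n$, $D_r(x^*,x^0)$ may be $+\infty$, so I would not apply the bound at $x^*$ directly. Instead, fix $x^*\in\calX^*$, set $\theta:=\varepsilon/(2\ln n)$, and put $u:=(1-\theta)x^*+\theta x^0$. Concavity of $f$ together with Lemma~\ref{lem:bound_init_gap} gives $f^*-f(u)\le\theta\,(f^*-f(x^0))\le\theta\ln n=\varepsilon/2$. Since $u\ge(\theta/n)e$ entrywise and $x^0=(1/n)e$, a direct computation of the Bregman divergence of $r=-\sum_i\ln(\cdot)$ yields $D_r(u,x^0)=-\sum_{i=1}^n\ln u_i-n\ln n\le n\ln(1/\theta)=n\ln(2\ln(n)/\varepsilon)$. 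Plugging into Step 1, $f^*-f(x^t)\le\varepsilon/2+n\ln(2\ln(n)/\varepsilon)/t\le\varepsilon$ as soon as $t\ge 2n\ln(2\ln(n)/\varepsilon)/\varepsilon$, so $T=O\big(n\varepsilon^{-1}\ln(\ln(n)/\varepsilon)\big)$ iterations suffice.

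\emph{Step 3 (cost of one iteration).} Forming $\{a_j^\top x^t\}_{j\in[m]}$ and then $\nabla f(x^t)$ costs $O(mn)$. For \eqref{eq:BP} with $r=-\sum_i\ln(\cdot)$, the KKT conditions give the closed form $x^{t+1}_i=\big(\lambda+1/x^t_i-\nabla_i f(x^t)\big)^{-1}$, where $\lambda$ is the unique root, on the appropriate half-line, of the strictly decreasing convex map $\lambda\mapsto\sum_{i=1}^n\big(\lambda+1/x^t_i-\nabla_i f(x^t)\big)^{-1}-1$; this one-dimensional equation can be solved to the required accuracy by bisection/Newton in $O(\ln n)$ steps of $O(n)$ work each, i.e.\ $O(n\ln n)$ per iteration. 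Hence each iteration costs $O(mn+n\ln n)$, and multiplying by $T$ gives the claimed $O\big((mn+n\ln n)\,n\,\varepsilon^{-1}\ln(\ln(n)/\varepsilon)\big)$.

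\emph{Main obstacle.} The crux is Step 2: RSGM's rate is governed by $D_r(\cdot,x^0)$, which diverges at the relative boundary where the optimizer usually sits, so one must concede a controlled $\varepsilon/2$ in objective value by shifting to $u=(1-\theta)x^*+\theta x^0$ to obtain a finite, $O(n\ln(\ln(n)/\varepsilon))$ bound on the divergence. A secondary bookkeeping point is that \eqref{eq:BP} is only solvable inexactly; one should check, via standard inexact Bregman-gradient estimates, that carrying an inner tolerance polynomially small in $\varepsilon/n$ changes neither the iteration count nor the per-iteration cost by more than constant factors.
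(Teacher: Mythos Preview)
Your proposal is correct and follows essentially the same approach as the paper: relative $1$-smoothness of $-f$ with respect to $r$, the shifted comparison point $(1-\theta)x^*+\theta x^0$ to circumvent $D_r(x^*,x^0)=+\infty$, and the $O(mn)+O(n\ln n)$ per-iteration cost. The only cosmetic differences are that the paper cites \cite[Lemma~7]{Bauschke_17} for relative smoothness and \cite{Nest_11} for the projection cost rather than spelling them out, and sets the mixing weight to $\varepsilon/(2\delta(x^0))$ before invoking Lemma~\ref{lem:bound_init_gap}, whereas you substitute $\delta(x^0)\le\ln n$ up front.
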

\begin{proof}
Since $f$ is $1$-smooth relative to $r$ on $\bbR^n_{++}$ (cf.~\cite[Lemma~7]{Bauschke_17}), from~\cite[Theorem~3.1]{Lu_18} (see also~\cite[Theorem 1(iv)]{Bauschke_17}), we know that for all $x\in\ri\Delta_n$, 
\begin{equation}
f(x) - f(x^t)\le D_r(x,x^0)/t, \quad \forall\,t\ge 1. \label{eq:conv_RSGM}
\end{equation}
Since it may happen that $\ri\Delta_n\cap\calL=\emptyset$ and hence $\calX^*\cap\ri\Delta_n=\emptyset$, similar to~\cite[Theorem 4.1]{Lu_18}, let us fix any $x^*\in\calX^*$, and define $\hatx := (1-\alpha)x^* + \alpha x^0$ with $\alpha:= \varepsilon/(2\delta(x^0))$. This ensures that 
\begin{equation}
\delta(\hatx)\le (1-\alpha)\delta(x^*) + \alpha\delta(x^0) = \varepsilon/2. \label{eq:delta_hatx}
\end{equation} 
In addition, since $x^0=(1/n)e$, we have $\nabla r(x^0) = -n e$ and hence 
\begin{equation}
D_r(\hatx,x^0) = r(\hatx) - r(x^0) \le r(\alpha x^0) - r(x^0) = -n\ln(\alpha) = n\ln(2\delta(x^0)/\varepsilon), \label{eq:bound_D_r}
\end{equation}
where the inequality follows from that $\hatx\ge \alpha x^0$ and the function $r$ is coordinate-wise decreasing.  From~\eqref{eq:delta_hatx}, it is clear that if $f(\hatx) - f(x^t)\le \varepsilon/2$, then $\delta(x^t)\le \varepsilon$, and from~\eqref{eq:bound_D_r}, we know that this happens in no more than 
\begin{equation}
\lceil 2(n/\varepsilon)\ln(2\delta(x^0)/\varepsilon)\rceil = O((n/\varepsilon)\ln(\delta(x^0)/\varepsilon)) = O((n/\varepsilon)\ln(\ln(n)/\varepsilon)) \quad {\rm iterations}.  \label{eq:T_RSGM}
\end{equation}
Next, we analyze the complexity of arithmetic operations in each iteration. 
First, note that computing $\nabla f(x^t)$ requires $O(mn)$ arithmetic operations, for all $t\ge 0$. In addition, from~\cite[Section~7]{Nest_11}, 
we know that the projection problem in~\eqref{eq:BP} be reduced to finding the unique root of a strictly decreasing univariate function on $(0,+\infty)$, and the root can  be computed to machine precision in $O(n\ln(n))$ arithmetic operations (see also~\cite{Ye_92,Bell_08}). Therefore, each iteration of RSGM requires $O(mn+n\ln(n))$ arithmetic operations. This, together with~\eqref{eq:T_RSGM}, completes the proof. 
\end{proof}

Now, let us switch our focus to FW-LHB. Note that FW-LHB cannot be directly applied to~\eqref{eq:P}, since 
the objective function $f$ in~\eqref{eq:P_u} is not self-concordant (cf.~\cite[Section~2.1]{Nest_94}). Instead, we apply FW-LHB to an equivalent problem of~\eqref{eq:P_u}, namely
\begin{equation}
{\min}_{x\in\Delta_n}\; -\textstyle\sum_{j=1}^m \ln(a_j^\top x).\tag{$\mathrm{P_s}$} \label{eq:P_s}
\end{equation}
Clearly, $x\in\Delta_n$ is an $\varepsilon$-optimal solution of~\eqref{eq:P_u} if and only if it is an $\bar{\varepsilon}$-optimal solution of~\eqref{eq:P_s} for $\bareps = m\varepsilon$. Based on this, we state the computational complexity of FW-LHB for finding an $\varepsilon$-optimal solution of~\eqref{eq:P_u} as follows.

\begin{prop}
Starting from $x^0=(1/n) e$, FW-LHB finds an $\varepsilon$-optimal solution of~\eqref{eq:P_u} in at most
\begin{align*}
O\big( m^2n \ln(n)(\ln(m)+\ln \ln(n)) + m^2n/\varepsilon\big) \quad \mbox{arithmetic operations}.
\end{align*}
\end{prop}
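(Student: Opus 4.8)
The plan is to reduce the stated bound to the known iteration-complexity guarantee of FW-LHB from~\cite{Zhao_20b} (see also~\cite{Dvu_20}), by first identifying the structural parameters of~\eqref{eq:P_s} and then accounting for the arithmetic cost of a single iteration. The two facts that place~\eqref{eq:P_s} squarely inside the FW-LHB framework are: (i) the objective $h(x):=-\sum_{j=1}^m\ln(a_j^\top x)$ is a self-concordant, $\theta$-logarithmically-homogeneous barrier with $\theta=m$, since $h(tx)=h(x)-m\ln t$ for all $t>0$ (the coefficient $1$ in front of each $\ln$ is essential here --- this is exactly why one works with~\eqref{eq:P_s} rather than with the non-self-concordant objective of~\eqref{eq:P_u}); and (ii) $x^0=(1/n)e\in\dom h$, because $a_j\ge 0$, $a_j\neq 0$ force $a_j^\top x^0=(1/n)\sum_{i=1}^n a_{ji}>0$ for every $j$.

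Next I would substitute the relevant quantities into the FW-LHB convergence bound. By the equivalence recorded just above, an $\varepsilon$-optimal point for~\eqref{eq:P_u} is exactly a $\bareps$-optimal point for~\eqref{eq:P_s} with $\bareps=m\varepsilon$; the barrier parameter is $\theta=m$; and the initial optimality gap satisfies $\Delta_0:=h(x^0)-\min_{x\in\Delta_n}h(x)=m\,\delta(x^0)\le m\ln(n)$, using $h=-mf$ and Lemma~\ref{lem:bound_init_gap}. The FW-LHB guarantee has the usual two-phase shape --- a ``warm-up'' phase, whose length depends on $\theta$, on $\Delta_0$, and on a $\sim\ln(n)$-sized quantity reflecting the value $h(x^0)$ (equivalently, how far $x^0$ sits from the relative boundary of $\Delta_n$), followed by a sublinear phase with $O(\theta^2/\bareps)$ iterations (equivalently, the rate $h(x^t)-h^*=O(\theta^2/t)$ for this barrier). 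Substituting $\theta=m$, $\bareps=m\varepsilon$, and $\Delta_0\le m\ln(n)$ and simplifying, the warm-up phase contributes $O(m\ln(n)(\ln m+\ln\ln n))$ iterations and the sublinear phase $O(m/\varepsilon)$, for a total of $O\big(m\ln(n)(\ln m+\ln\ln n)+m/\varepsilon\big)$ iterations.

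Then I would bound the arithmetic cost of one FW-LHB iteration on~\eqref{eq:P_s}. Maintaining $\{a_j^\top x^t\}_{j=1}^m$ under the update $x^{t+1}=(1-\gamma_t)x^t+\gamma_t e_{i_t}$ costs $O(m)$; forming $\nabla h(x^t)=-A^\top d^t$ with $d^t_j:=1/(a_j^\top x^t)$ is a matrix--vector product costing $O(mn)$; the linear-minimization step $\min_{s\in\Delta_n}\langle\nabla h(x^t),s\rangle$ reduces to picking the smallest entry of $\nabla h(x^t)$, costing $O(n)$; and the FW-LHB step-size is closed-form in the Frank--Wolfe gap and the local Hessian seminorm $\langle\nabla^2 h(x^t)(x^t-e_{i_t}),x^t-e_{i_t}\rangle=\sum_{j=1}^m(a_j^\top x^t-a_{j i_t})^2/(a_j^\top x^t)^2$, each of which costs $O(m)$ once $\{a_j^\top x^t\}_j$ and column $i_t$ of $A$ are available. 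Hence one iteration costs $O(mn)$. Multiplying the iteration count by $O(mn)$ gives $O\big(m^2 n\ln(n)(\ln m+\ln\ln n)+m^2 n/\varepsilon\big)$ arithmetic operations, which is the claim.

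The hard part will be transcribing the FW-LHB bound from~\cite{Zhao_20b,Dvu_20} with every problem-dependent constant made explicit for~\eqref{eq:P_s}: in particular, confirming that the sublinear-phase rate scales like $\theta^2/t$ (so that the $1/\varepsilon$ term carries exactly the factor $m^2 n$, not more), and that the warm-up phase collapses to precisely $O(m\ln(n)(\ln m+\ln\ln n))$ after the substitutions $\theta=m$ and $\Delta_0\le m\ln(n)$ --- this requires tracking where the extra $\ln(n)$ enters (through $h(x^0)$, i.e.\ the position of $x^0$ relative to the boundary of $\Delta_n$) rather than only through $\ln\Delta_0=\ln m+\ln\ln n$. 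A minor but necessary check is that the closed-form FW-LHB step-size keeps the iterates inside $\dom h\cap\Delta_n$ --- which is automatic from the logarithmic homogeneity of $h$ --- so that the gradients and Hessian seminorms used above are well-defined at every step.
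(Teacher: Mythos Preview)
Your approach is correct and matches the paper's: invoke the FW-LHB iteration bound from \cite{Zhao_20b} with barrier parameter $\theta=m$ and target accuracy $\bareps=m\varepsilon$, bound the initial gap via Lemma~\ref{lem:bound_init_gap}, and multiply by the $O(mn)$ per-iteration cost. The paper's proof is terser --- it just cites \cite[Remark~2.1]{Zhao_20b} for the iteration count $O\big(m\delta(x^0)\ln(m\delta(x^0))+m^2/\bareps\big)$ and \cite[Section~2]{Zhao_20b} for the $O(mn)$ per-iteration cost --- whereas you spell out the self-concordance, the log-homogeneity, and the per-iteration arithmetic in detail.

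One small clarification on your ``hard part'' paragraph: the warm-up term quoted in the paper is simply $O(\Delta_0\ln\Delta_0)$ with $\Delta_0=m\delta(x^0)\le m\ln(n)$, and this \emph{alone} gives $O(m\ln(n)(\ln m+\ln\ln n))$ --- the factor $m\ln(n)$ is $\Delta_0$ itself and the factor $(\ln m+\ln\ln n)$ is $\ln\Delta_0$. There is no separate ``distance to boundary'' contribution through $h(x^0)$ beyond what is already captured in $\Delta_0$, so the extra tracking you anticipate is unnecessary.
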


\begin{proof}
From~\cite[Remark~2.1]{Zhao_20b}, we know that to find an $\bareps$-optimal solution of~\eqref{eq:P_s}, the iteration complexity of FW-LHB is 
\begin{equation}
O\big( m \delta(x^0)\ln(m \delta(x^0)) + m^2/\bareps\big) = O\big( m \delta(x^0)\ln(m \delta(x^0)) + m/\varepsilon\big).
\end{equation}
Since $\delta(x^0)\le \ln(n)$ (cf.\ Lemma~\ref{lem:bound_init_gap}) and each iteration of FW-LHB requires $O(mn)$ arithmetic operations (cf.~\cite[Section~2]{Zhao_20b}), we complete the proof. 
\end{proof}

Finally, from Theorem~\ref{thm:O(1/t)}, we can easily derive the computational complexity of MG for finding an $\varepsilon$-optimal solution of~\eqref{eq:P_u} as follows: 

\begin{prop}
Starting from $x^0=(1/n) e$, MG finds an $\varepsilon$-optimal solution of~\eqref{eq:P_u} in at most
\begin{align*}
O\big( mn\ln(n)/\varepsilon\big) \quad \mbox{arithmetic operations}.
\end{align*}
\end{prop}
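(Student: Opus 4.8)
The plan is to combine the non-ergodic convergence rate of Corollary~\ref{cor:data_indep_rate} with a per-iteration arithmetic cost count for the update~\eqref{eq:MG}. First I would invoke Corollary~\ref{cor:data_indep_rate}: since $x^0 = (1/n)e$, we have $f^* - f(x^t) \le \ln(n)/(t+1)$ for all $t \ge 0$. Hence $f^* - f(x^t) \le \varepsilon$ as soon as $t+1 \ge \ln(n)/\varepsilon$, i.e., after at most $\lceil \ln(n)/\varepsilon\rceil = O(\ln(n)/\varepsilon)$ iterations; and since $\{x^t\}_{t\ge 0}\subseteq\Delta_n$ by observation~\ref{item:feas}, each such $x^t$ is a bona fide $\varepsilon$-optimal solution of~\eqref{eq:P_u}.

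Next I would count the arithmetic operations per iteration. At iteration $t$, forming the $m$ inner products $a_j^\top x^t$ ($j\in[m]$) costs $O(mn)$ operations; given these, each partial derivative $\nabla_i f(x^t) = \sum_{j=1}^m p_j a_{ji}/(a_j^\top x^t)$ costs $O(m)$, for a total of $O(mn)$ over all $i\in[n]$; and the componentwise multiplicative update $x^{t+1}_i := x^t_i\,\nabla_i f(x^t)$ costs an additional $O(n)$. Thus each iteration of MG requires $O(mn)$ arithmetic operations. Multiplying the iteration count by the per-iteration cost gives $O(mn)\cdot O(\ln(n)/\varepsilon) = O(mn\ln(n)/\varepsilon)$, which is the claimed bound.

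There is essentially no obstacle here: the result is an immediate corollary of Theorem~\ref{thm:O(1/t)}/Corollary~\ref{cor:data_indep_rate} together with the trivial fact that MG requires no step-size search, no Bregman projection, and no linear minimization subproblem, so its per-iteration cost is dominated by the single gradient evaluation. The only minor point worth noting is that the multiplicative update automatically preserves both feasibility and the support pattern (observations~\ref{item:feas} and~\ref{item:supp}), so no renormalization or extra bookkeeping is needed; this has already been established in Section~1.2 and need only be cited.
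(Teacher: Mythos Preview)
Your proposal is correct and matches the paper's approach exactly: the paper states the proposition with no proof beyond the remark that it follows easily from Theorem~\ref{thm:O(1/t)}, and your argument (iteration bound $O(\ln(n)/\varepsilon)$ from Corollary~\ref{cor:data_indep_rate} times $O(mn)$ per-iteration cost for the gradient and multiplicative update) is precisely the intended derivation, only spelled out in more detail than the paper itself provides.
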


\begin{table}\centering\renewcommand{\arraystretch}{1.5}
\caption{The computational complexities of MG, RSGM and FW-LHB for finding an $\varepsilon$-optimal solution of~\eqref{eq:P_u}.} \label{tab:comp}
\begin{tabular}{|c|c|}\hline
RSGM &  $O\left(\frac{(mn+n\ln(n))n}{\varepsilon}\ln\left(\frac{\ln(n)}{\varepsilon}\right)\right)$ \\\hline
FW-LHB & $O\left( m^2n \ln(n)(\ln(m)+\ln \ln(n)) + \frac{m^2n}{\varepsilon}\right)$ \\\hline
MG &  $O\left( \frac{mn\ln(n)}{\varepsilon}\right)$  \\\hline
\end{tabular}
\end{table}

For ease of comparison, we summarize the computational complexities of MG, RSGM and FW-LHB for finding an $\varepsilon$-optimal solution of~\eqref{eq:P_u} in Table~\ref{tab:comp}. From this table, we see that MG always has a lower complexity compared to RSGM. In addition, for sufficiently small accuracy $\varepsilon$ --- so that the computational complexity of FW-LHB becomes (essentially) $O(m^2n/\varepsilon)$, MG has a lower complexity compared to FW-LHB as long as $n = O(\exp(m))$. This 
provides an explanation to the superior numerical performance of MG compared to RSGM and FW-LHB on the PET problem as observed in~\cite[Section~4.2]{Zhao_20b}.

\section*{Acknowledgment}

The author's research is supported by AFOSR Grant No. FA9550-19-1-0240. The author is grateful for the helpful comments from Prof.\ R.\ M.\ Freund that greatly improve the exposition of the current manuscript. 

\appendix

\section{Proof of Lemma~\ref{lem:monotonicity}}\label{app:proof}

The following proof is borrowed from Cover~{\cite[Theorem~1]{Cover_84}}. 
We have 
\begin{align*}
f(x^{t+1}) - f(x^t) &= \sum_{j=1}^m p_j \ln\left(\frac{a_j^\top x^{t+1}}{a_j^\top x^{t}}\right)\\
 &= \sum_{j=1}^m p_j \ln\left(\sum_{i=1}^n\frac{a_{ji} x_i^t}{a_j^\top x^{t}}\frac{x_i^{t+1}}{x_i^t}\right)\\
&\ge \sum_{j=1}^m p_j \sum_{i=1}^n\frac{a_{ji} x_i^t}{a_j^\top x^{t}} \ln\left(\frac{x_i^{t+1}}{x_i^t}\right)\\
&= \sum_{i=1}^n x_i^t \nabla_i f(x^t) \ln\left(\frac{x_i^{t+1}}{x_i^t}\right)\\
& = \sum_{i=1}^n x_i^{t+1} \ln\left(\frac{x_i^{t+1}}{x_i^t}\right) = D_{\rm KL}(x^{t+1},x^t) \ , 
\end{align*}
where the inequality follows from the concavity of $\ln(\cdot)$, and the last equality uses~\eqref{eq:def_KL}. \qed



\small
\bibliographystyle{IEEEtr}
\bibliography{math_opt,mach_learn,stat_ref}

\end{document}